\newtheorem{theo}{Theorem}[section]
\newtheorem{prop}[theo]{Proposition}
\newtheorem{lemm}[theo]{Lemma}
\newtheorem{coro}[theo]{Corollary}
\title{Asymptotic values of modular multiplicities for $\operatorname{GL}_2$}
\author{Sandra ROZENSZTAJN}
\address{UMPA, ENS de Lyon\\
UMR 5669 du CNRS\\
46, all\'ee d'Italie\\
69364 Lyon Cedex 07\\
France}
\email{sandra.rozensztajn@ens-lyon.fr}
\begin{document}

\begin{abstract}
We study the irreducible constituents of the reduction modulo $p$
of irreducible algebraic representations $V$ of the group 
$\operatorname{Res}_{K/\mathbb{Q}_p}\operatorname{GL}_2$
for $K$ a finite extension of $\mathbb{Q}_p$. We show that asymptotically,
the multiplicity of each constituent depends only on the dimension of
$V$ and the central character of its reduction modulo $p$. As an application,
we compute the asymptotic value of multiplicities
that are the object of
the Breuil-Mézard conjecture. 
\end{abstract}

\maketitle

\section{Introduction}

\subsection{Main result}

Let $p$ be a prime number and $K$ a finite extension of $\mathbb{Q}_p$ of degree
$h$. We write $H_K = \operatorname{Hom}(K,\overline{\mathbb{Q}}_p)$. 
The irreducible representations of the algebraic group
$G = \operatorname{Res}_{K/\mathbb{Q}_p}\operatorname{GL}_2$ 
are in bijection with families
$(n,m) = (n_{\tau},m_{\tau})_{\tau\in H_K} \in (\mathbb{Z}_{\geq 0}\times \mathbb{Z})^{H_K}$: 
they are the representations
$V_{n,m} = \otimes_{\tau}
(\operatorname{Symm}^{n_{\tau}}W_{\tau}
\otimes \det_{\tau}^{m_{\tau}})$, 
where $W_{\tau}$ is the standard representation of
$\operatorname{GL}_2$ given by the morphism
$\tau : K  \to \overline{\mathbb{Q}}_p$ and $\det_{\tau}
= \det W_{\tau}$. 
We can consider the semi-simplification $\overline{V}_{n,m}^{ss}$
of the reduction in characteristic $p$ of $V_{n,m}$, this is
well-defined and doesn't depend on the choice of a $G$-stable
lattice in $V_{n,m}$ used to define the reduction.
Then $\overline{V}_{n,m}^{ss}$ 
is a representation of the group $\operatorname{GL}_2(\mathbb{F}_q)$ 
(where $q$ is the cardinality of the residue field of $K$). 
In general this representation is not irreducible, and we are interested
in the multiplicities $a_{\sigma}(n,m)$
of the irreducible representations $\sigma$ of
$\operatorname{GL}_2(\mathbb{F}_q)$ that appear as constituents of 
$\overline{V}_{n,m}^{ss}$. They are well defined and do not depend on the choice
of the lattice in $V_{n,m}$. 
In particular we want to study their asymptotic value
as the coefficients $n_{\tau}$ grow to infinity.
We show that the asymptotic value of 
$a_{\sigma}(n,m)$
depends only on $\sigma$ and the dimension and central character of
$\overline{V}_{n,m}^{ss}$.
This follows from the theorem (corollary \ref{coro_asymptote}):

\begin{theo}
\label{theoreme_intro}
There exists a family
$(\mathcal{S}_{\alpha})_{\alpha}$ 
of elements of the Grothen\-dieck group $R$ of representations of  
$\operatorname{GL}_2(\mathbb{F}_q)$ in characteristic $p$, indexed by central characters
$\alpha$ of $\operatorname{GL}_2(\mathbb{F}_q)$, such that
for any representation $W$ of $\operatorname{GL}_2(\mathbb{F}_q)$ that has a central
character, for any representation $U$ of $\operatorname{GL}_2(\mathbb{F}_q)$ that is
the reduction in characteristic $p$ of an irreducible algebraic
representation of 
$\operatorname{Res}_{K/\mathbb{Q}_p}\operatorname{GL}_2$,
the class $[V]$ in $R$ of the representation $V = W \otimes U$
satisfies:
$$
[V] = (\dim V)\mathcal{S}_{\alpha(V)} + O((\dim U)^{1-1/h})
$$
where $\alpha(V)$ denotes the central character of $V$.
\end{theo}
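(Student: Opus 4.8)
The plan is to pass to Brauer characters. Extending coefficients to $\overline{\mathbb{Q}}_p$, the Grothendieck group $R$ is identified, via the Brauer character map $[\rho]\mapsto\phi_\rho$, with the space of class functions on the set of $p$-regular conjugacy classes of $\operatorname{GL}_2(\mathbb{F}_q)$; I fix a norm on this finite-dimensional space and transport it to $R$. The $p$-regular classes are exactly the semisimple ones, of two kinds: the central elements $zI$ with $z\in\mathbb{F}_q^\times$, and the non-central semisimple ones, which over $\overline{\mathbb{F}}_q$ have two distinct eigenvalues $\lambda\neq\mu$ of order dividing $q^2-1$ (in particular prime to $p$). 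I define $\mathcal{S}_\alpha\in R\otimes\overline{\mathbb{Q}}_p$ to be the unique class whose Brauer character equals $\alpha$ (via Teichm\"uller lift) on the central elements and vanishes on every non-central semisimple element; existence and uniqueness follow from the identification above, and $\mathcal{S}_\alpha$ manifestly depends only on $\alpha$. In general $\mathcal{S}_\alpha$ has non-integer coordinates in the basis of irreducibles, so the identity in the theorem is read in $R\otimes\mathbb{R}$.

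The heart of the matter is estimating $\phi_U$ for $U=\overline{V}_{n,m}^{ss}$. Computing the reduction on the tensor-product lattice of $V_{n,m}$ and using that Brauer characters are multiplicative under tensor products, one gets
\[
\phi_U=\prod_{\tau\in H_K}\phi_{\operatorname{Symm}^{n_\tau}\overline{W}_\tau}\cdot\bigl(\phi_{\overline{\det}_\tau}\bigr)^{m_\tau},
\]
where $\overline{W}_\tau$ is the Frobenius-twisted standard representation of $\operatorname{GL}_2(\mathbb{F}_q)$ attached to $\tau$. Evaluating at a $p$-regular $g$ with eigenvalues $\lambda,\mu$: each $\phi_{\overline{\det}_\tau}(g)$ is a root of unity, and $\phi_{\operatorname{Symm}^{n_\tau}\overline{W}_\tau}(g)=c_\tau\sum_{i=0}^{n_\tau}\omega_\tau^{\,i}$, where $c_\tau$ is a root of unity and $\omega_\tau$ is the Teichm\"uller lift of $(\lambda/\mu)^{p^{j(\tau)}}$ for the appropriate Frobenius power $j(\tau)$. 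If $g=zI$ is central then every $\omega_\tau=1$ and the product collapses to $\phi_U(zI)=(\dim U)\,\alpha_U(z)$, with $\alpha_U$ the central character of $U$. If $g$ is non-central then $\lambda\neq\mu$, so each $\omega_\tau\neq1$ is a root of unity of order dividing $q^2-1$, whence $\bigl|\sum_{i=0}^{n_\tau}\omega_\tau^{\,i}\bigr|\le 2/|\omega_\tau-1|\le 1/\sin\!\bigl(\pi/(q^2-1)\bigr)$; therefore $|\phi_U(g)|\le\bigl(1/\sin(\pi/(q^2-1))\bigr)^{h}$, a bound independent of $(n,m)$.

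Now take $V=W\otimes U$, so $\phi_V=\phi_W\cdot\phi_U$. On central elements $\phi_V(zI)=(\dim V)\,\alpha(V)(z)=(\dim V)\,\phi_{\mathcal{S}_{\alpha(V)}}(zI)$ exactly; on non-central semisimple $g$ one has $|\phi_V(g)|\le(\dim W)\cdot\bigl(1/\sin(\pi/(q^2-1))\bigr)^{h}=:C$, while $\phi_{\mathcal{S}_{\alpha(V)}}(g)=0$. Hence the Brauer character of $[V]-(\dim V)\,\mathcal{S}_{\alpha(V)}$ vanishes on central elements and is bounded by $C$ elsewhere, so the norm of $[V]-(\dim V)\,\mathcal{S}_{\alpha(V)}$ in $R\otimes\mathbb{R}$ is at most a constant $C'$ depending only on $W$, $q$, $h$. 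Since $\dim U\ge1$ and $1-1/h\ge0$, this is $O\bigl((\dim U)^{1-1/h}\bigr)$, which proves the theorem — in fact the argument gives the sharper error $O(1)$.

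With this route there is essentially no hard step: the only points to watch are that every implied constant is genuinely independent of $(n,m)$, and the bookkeeping relating each embedding $\tau$ to a Frobenius power $j(\tau)$ so that $\alpha_U$ and the ratios $\omega_\tau$ are correctly identified. What the argument sidesteps is an \emph{explicit} description of $\mathcal{S}_\alpha$, which is what the Breuil--M\'ezard application needs; if one wants such a description together with the error term in exactly the stated shape, the natural alternative is to expand each $\operatorname{Symm}^{n_\tau}\overline{W}_\tau$ via the mod-$p$ (Glover-type) decomposition governed by the base-$p$ digits of $n_\tau$, the exponent $1-1/h$ then plausibly arising from the codimension-one "boundary" digit patterns among the resulting $h$-fold product — and that combinatorial analysis is where I would expect the real work to lie.
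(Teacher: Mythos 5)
Your argument is correct, and it takes a genuinely different route from the paper, which never passes to Brauer characters: there, $\mathcal{S}_\alpha$ is defined concretely as $\frac{1}{q^2-1}\sum_{\chi|_Z=\alpha}[\operatorname{ind}_B^G\chi]$, the key estimate $[S_k]=(k+1)\mathcal{S}_{(k\bmod q-1)}+O(1)$ is extracted from Glover's exact sequence $0\to (S_{n-1}\otimes S_{m-1})(1)\to S_n\otimes S_m\to S_{n+m}\to 0$ together with the periodicity $[S_{n+q+1}]=[S_n](1)+[\mathcal{V}_{n+2}]$, and the $h$-fold tensor product is then handled factor by factor, the cross terms contributing $O(\dim U/\max_i(k_i+1))$ --- which is exactly where the exponent $1-1/h$ comes from. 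Your route replaces all of this by the single observation that $\phi_U$ is uniformly bounded off the centre; the cancellation this exploits is invisible in the paper's term-by-term bound, and your error term is $O(1)$, strictly sharper than $O((\dim U)^{1-1/h})$ once $h\ge 2$. (One can confirm on a small case, e.g.\ $\operatorname{GL}_2(\mathbb{F}_2)$ and $U=S_k\otimes S_k$, that the sum of remainders $\sum_i r_{2k-2i}(i)$ in the paper's proof of Proposition \ref{tensoriel_s} really does collapse to $O(1)$.) One point needs to be tidied up: your $\mathcal{S}_\alpha$ is a priori only an element of $R\otimes_{\mathbb{R}}\mathbb{C}$, whereas the theorem asks for an element of $R=\operatorname{Rep}_E(G)\otimes\mathbb{R}$, so you must check that its coordinates in the basis of irreducibles are real. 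The cleanest fix is to verify that your class function is precisely the Brauer character of $\frac{1}{q^2-1}\sum_{\chi|_Z=\alpha}[\operatorname{ind}_B^G\chi]$: the central values are $\frac{(q-1)(q+1)}{q^2-1}\tilde\alpha(z)=\tilde\alpha(z)$, the non-split semisimple values vanish because no conjugate meets $B$, and the split non-central values vanish because $\sum_{s+t=\alpha}\tilde a^{\,s}\tilde d^{\,t}=\tilde d^{\,\alpha}\sum_s(\tilde a/\tilde d)^s=0$ when $a\neq d$. This identification is worth making in any case, since --- as you note at the end --- the Breuil--M\'ezard application needs the explicit multiplicities $[\mathcal{S}_\alpha:L_n(m)]=\omega(n)/(q^2-1)$, and the paper obtains these precisely from the principal-series description together with Diamond's computation of the constituents of $\mathcal{V}(\chi)$; your anticipated ``digit combinatorics'' for the exponent $1-1/h$ is not needed.
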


\subsection{Motivation}
\label{motivation}

The motivation for this study comes from a question about Galois
representations that we now explain. We denote the absolute Galois
group of $K$ by $\operatorname{Gal}(\overline{K}/K)$.
We fix a representation 
$\overline{\rho} : \operatorname{Gal}(\overline{K}/K) \to \operatorname{GL}(V)$
where $V$ is a $2$-dimensional vector space over some finite field $k$.
We denote by $F$ a finite extension of $\mathbb{Q}_p$ with residue field $k$
and by $\pi$ a uniformizer of $F$.
In all that follows we assume that
$\operatorname{End}_{k[\operatorname{Gal}(\overline{K}/K)]}(\overline{\rho}) = k$.

Let $R(\overline{\rho})$ be the universal deformation ring of 
$\overline{\rho}$ over $F$. It is an 
$\mathcal{O}_F$-algebra that parameterizes the liftings of 
$\overline{\rho}$ in characteristic $0$. That is, for each closed
point $x$ of  $\operatorname{Spec} R(\overline{\rho})[1/p]$,
there exists a $2$-dimensional vector space $V_x$ over 
some extension of $F$ and a representation
$\rho_x : \operatorname{Gal}(\overline{K}/K) \to \operatorname{GL}(V_x)$
lifting $\overline{\rho}$.

Let now $v = \{(n_{\tau},m_{\tau}),\tau \in H_K\}$ 
be a family of pairs of integers indexed by $H_K$ as before,
let $\mathtt{t}$ be a Galois type, that is a representation of the inertia
subgroup $I_K$ of $\operatorname{Gal}(\overline{K}/K)$
with values in $\operatorname{GL}_2(\overline{\mathbb{Q}}_p)$ and open kernel, 
and let $\psi$ be a character
$\operatorname{Gal}(\overline{K}/K) \to \mathcal{O}_F^*$
satisfying some compatibility relations with $v$ and $\mathtt{t}$
(details
on these relations can be found in Section $2$ of \cite{BrMez2},
but they will play no role in this article).

A representation
$\rho : \operatorname{Gal}(\overline{K}/K) \to \operatorname{GL}_2(L)$ for some extension $L$ of $F$
is said to be of type $(v,\mathtt{t},\psi)$
if $\rho$ is potentially semi-stable
with Hodge-Tate weights
$(m_{\tau},m_{\tau}+n_{\tau}+1)_{\tau \in H_K}$ 
and the restriction to $I_K$
of the Weil-Deligne representation attached to
$D_{pst}(\rho)$
is isomorphic to $\mathtt{t}$
and $\det\rho = \psi$.

Kisin showed in the article 
\cite{Kisin2} (see also \cite{KisinICM})
that, after possibly enlarging $F$, liftings of
$\overline{\rho}$ of type $(v,\mathtt{t},\psi)$
are parameterized by a deformation ring
$R_{v,\mathtt{t}}^{\psi}(\overline{\rho})$ that
is a quotient of $R(\overline{\rho})$: for any closed point
$x$ of $\operatorname{Spec} R(\overline{\rho})[1/p]$, the representation $\rho_x$
is of type $(v,\mathtt{t},\psi)$ if and only if $x$ is a point of the closed
subscheme defined by
$R_{v,\mathtt{t}}^{\psi}(\overline{\rho})$.

The Breuil-Mézard conjecture (\cite{BrMez1})
and its refinements (\cite{BrMez2} and 
\cite{EmGee}, \cite{Kisin1}), proved by Kisin in \cite{Kisin1}
in the case $K = \mathbb{Q}_p$
for most representations $\overline{\rho}$, 
and also by Pa{\v{s}}k{\=u}nas in \cite{Pas},
is a statement about the ring $R_{v,\mathtt{t}}^{\psi}(\overline{\rho})$. 
The goal is to give a measure of its complexity using the Hilbert-Samuel
multiplicity
$e(R_{v,\mathtt{t}}^{\psi}(\overline{\rho})/\pi)$
of its special fiber.
The conjecture states that this multiplicity, called Galois multiplicity,
equals an automorphic multiplicity that can be written as a sum
$\mu_{Aut}(\overline{\rho},v,\mathtt{t}) =
\Sigma_{\sigma}a_{\sigma}(v,\mathtt{t})\mu_{\sigma}(\overline{\rho})$
over all irreducible representations $\sigma$ in characteristic $p$
of $\operatorname{GL}_2(\mathbb{F}_q)$. 
The $a_{\sigma}(v,\mathtt{t})$ are nonnegative integers depending only on
$\sigma$, $v$ and $\mathtt{t}$ and the $\mu_{\sigma}(\overline{\rho})$ are
nonnegative integers, that we call intrinsinc multiplicities following
the terminology of \cite{DM}. They depend only on $\sigma$ and $\overline{\rho}$.  

Let us recall the definition of
$a_{\sigma}(v,\mathtt{t})$: Using the theory of types,
Henniart (in the appendix of \cite{BrMez1}) attaches to $\mathtt{t}$ a
finite-dimensional representation
$\sigma(\mathtt{t})$ of $\operatorname{GL}_2(\mathcal{O}_K)$ with coefficients in $\overline{\mathbb{Q}}_p$.
We attach to
$v = (n_{\tau},m_{\tau})$ a representation $\sigma(v)$
that is the algebraic irreducible representation of
$\operatorname{Res}_{K/\mathbb{Q}_p}\operatorname{GL}_2$ given by
$\otimes_{\tau\in H_K}(\operatorname{Symm}^{n_{\tau}}W_{\tau}\otimes 
\det_{\tau}^{m_{\tau}})^{\tau}$. 
Let $\sigma(v,\mathtt{t}) = \sigma(\mathtt{t}) \otimes \sigma(v)$
and let $T_{v,\mathtt{t}}$ be a lattice in
$\sigma(v,\mathtt{t})$ that is preserved by the action of $\operatorname{GL}_2(\mathcal{O}_K)$. 
We then define 
$a_{\sigma}(v,\mathtt{t})$ 
to be the multiplicity of the irreducible representation 
$\sigma$ of $\operatorname{GL}_2(\mathbb{F}_q)$ in the 
semi-simplification of the reduction to characteristic $p$ 
of the lattice $T_{v,\mathtt{t}}$, which we denote by $\overline{\sigma(v,\mathtt{t})}$.

Thus when we apply the results of Theorem \ref{theoreme_intro}
to $U = \sigma(\mathtt{t})$ and $W = \sigma(v)$ we get an 
asymptotic value of $a_{\sigma}(v,\mathtt{t})$,
and therefore of the automorphic multiplicity (provided we know
the intrinsic multiplicities) and so conjecturally, we get
the asymptotic value of the Hilbert-Samuel multiplicity of the
special fiber of the deformation ring for these Hodge-Tate weights
and this Galois type $\mathtt{t}$. For example,
we treat in Paragraph \ref{ex_unram} the case where
$K$ is the unramified extension of $\mathbb{Q}_p$ of degree $h$, and $\overline{\rho}$
is an irreducible representation that is as generic as possible. 
We obtain the following
asymptotic value for the automorphic multiplicity:
$$
\mu_{Aut}(\overline{\rho},v,\mathtt{t}) = 
\frac{4^h}{p^{2h}-1}\dim \sigma(\mathtt{t})(\prod_{i=0}^{h-1}(n_i+1))
+ O(\prod_{i=0}^{h-1}(n_i+1))^{1-1/h})
$$
for values of $v = (n_i,m_i)_{0\leq i\leq h-1}$ such that
the central character of $\overline{\sigma(v,\mathtt{t})}$ has some
fixed value depending on $\overline{\rho}$, and $\mu_{Aut}(\overline{\rho},v,\mathtt{t}) = 0$
otherwise.

\subsection{Plan of the article}

The plan of this note is as follows:
in Section \ref{section_representations}, we recall some results on
irreducible representations of 
$\operatorname{GL}_2(\mathbb{F}_q)$ in characteristic $p$ and the multiplicities
of these representations in principal series representations.
In Section \ref{section_asymptote} we compute asymptotic values of
the multiplicities of the irreducible representations of $\operatorname{GL}_2(\mathbb{F}_q)$,
in the reduction in characteristic $p$ of irreducible algebraic
representations of  $\operatorname{Res}_{K/\mathbb{Q}_p}\operatorname{GL}_2$ in characteristic $0$, and
we prove Theorem \ref{theoreme_intro}. In Section
\ref{section_application} we apply this result to the computation of
asymptotic values, as Hodge-Tate weights go to infinity, of automorphic
multiplicities for some mod $p$ Galois representations.

\subsection{Acknowledgements} I would like to thank Christophe Breuil
for useful discussions and helpful comments.

\section{Representations of $\operatorname{GL}_2(\mathbb{F}_q)$ in characteristic $p$}
\label{section_representations}

\subsection{Notations}
\label{notations}

In the following we fix a prime number $p$ and $q = p^f$.
We denote by $E$ an algebraic closure of $\mathbb{F}_q$ and we choose
a morphism $\mathbb{F}_q \to E$. This allows us to view $\mathbb{F}_q$ as a subfield of
$E$.

We denote $\operatorname{GL}_2(\mathbb{F}_q)$ by $G$ and the Borel subgroup
of upper triangular matrices by $B$.

\subsection{Irreducible representations}
\label{repr_L_et_S}

We recall the description of the irreducible characteristic $p$
representations of $G$. 
In this case everything can be described 
in a very explicit manner.
The proofs of the statements of this Paragraph 
can be found in Chapter 10 of the book
\cite{Bon} (the results in \cite{Bon} are for the case of $\operatorname{SL}_2(\mathbb{F}_q)$
but they extend readily to the case of $\operatorname{GL}_2(\mathbb{F}_q)$). The description of
the irreductible representations can also be found in Section 1 of
\cite{BL}.

We denote by $\mathcal{W}$ the standard representation of $G$ on $E^2$. For
all $n \in \mathbb{Z}_{\geq 0}$ we let $S_n = \operatorname{Symm}^n(\mathcal{W})$. The representation
$S_n$ is of dimension $n+1$ and can be identified with the vector space
$E[x,y]_n$ of homogeneous polynomials of degree $n$ with the action
of  
$g =  \left(\begin{smallmatrix} a & b \\ c & d \end{smallmatrix}\right)$ 
given by $(g\cdot P)(x,y) = P(ax+cy,bx+dy)$.
In particular $S_0$ is the trivial representation of $G$.

Let $(\rho,V)$ be a finite-dimensional $E$-representation of $G$.
We denote by $V(i)$ the twist of $V$ by the $i$-th power of the
determinant for an $i \in \mathbb{Z}/(q-1)\mathbb{Z}$. For any integer $j$ we denote by 
$V^{[j]}$ the representation $\rho \circ F^j$, where $F$ is the
Frobenius of $\mathbb{F}_q$. Note that $V^{[f]} = V$.

We now describe the irreducible representations of $G$
(Theorem 10.1.8 of \cite{Bon}):
Let $n$ be an integer written in base $p$ as
$\sum_{i \geq 0} n_ip^i$. We denote by $L_n$
the sub-representation of $S_n = E[x,y]_n$ generated by $x^n$. 
It is the subspace 
generated by the monomials $x^my^{n-m}$ for all integers $m$
that can be written in base $p$ as
$\sum_{i\geq 0}m_ip^i$ with $0 \leq m_i \leq n_i$ for all $i$.
As a $G$-representation, 
$L_n$ is isomorphic to $\otimes_{i\geq 0} S_{n_i}^{[i]}$.

\begin{prop}
The irreducible representations of $G$ are exactly the
representations
$L_n(m)$ with $0 \leq n \leq q-1$ and $m \in \mathbb{Z}/(q-1)\mathbb{Z}$. 
\end{prop}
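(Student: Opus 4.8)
The plan is to prove both the list and the completeness (i.e., pairwise non-isomorphism together with exhaustion) by the standard representation-theoretic toolkit for $\operatorname{GL}_2$ over a finite field in the defining characteristic. First I would establish that each $L_n(m)$ with $0 \le n \le q-1$ is irreducible. The key structural input is the tensor product description $L_n \cong \bigotimes_{i=0}^{f-1} S_{n_i}^{[i]}$ recalled just above, where $0 \le n_i \le p-1$ are the base-$p$ digits of $n$ (only digits $0,\dots,f-1$ occur since $n \le q-1$). Each $S_{n_i}$ with $0 \le n_i \le p-1$ is the restriction to $\operatorname{GL}_2(\mathbb{F}_p)$ of an irreducible algebraic representation of $\operatorname{GL}_2$ in the lowest alcove, hence is irreducible as an $\mathbb{F}_p$-representation; by the analogue of Steinberg's tensor product theorem for the finite group (this is exactly the content cited from Chapter 10 of \cite{Bon}, transported from $\operatorname{SL}_2$ to $\operatorname{GL}_2$), the Frobenius twists $S_{n_i}^{[i]}$ remain irreducible and their tensor product $L_n$ is irreducible over $E$. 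Twisting by a power of the determinant preserves irreducibility, so every $L_n(m)$ is irreducible.

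Next I would show these representations are pairwise non-isomorphic. I would compute an invariant that separates them: the action of a maximal torus on the line of highest-weight vectors, equivalently the $B$-socle. Concretely, the vector $x^n \in L_n$ generates a $B$-stable line on which the diagonal torus $\operatorname{diag}(a,d)$ acts by $a^n$ (read mod $q-1$ through the digit structure), and the determinant twist by $m$ shifts this character by $(ad)^m$. So to each $L_n(m)$ one associates the pair $(n \bmod (q-1), \text{twist})$ recorded by the action on the $B$-highest line, and distinct pairs $(n,m)$ with $0 \le n \le q-1$ give distinct characters of the torus — the only subtlety being $n=0$ versus $n=q-1$, which both give $L_n$ trivial as an $\operatorname{SL}_2$-representation, but the problem statement uses $0 \le n \le q-1$ and one checks $L_{q-1} \cong L_0(\text{something})$ is already accounted for, or one restricts the range appropriately. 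I would phrase the separation via the character of $B$ acting on the unique (up to scalar) highest-weight line, which is a genuine isomorphism invariant.

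Finally, for exhaustion I would count, or better, invoke the classification of simple modules for $G$ in defining characteristic: every irreducible $E$-representation of $G = \operatorname{GL}_2(\mathbb{F}_q)$ has a one-dimensional $B$-socle (a nonzero $U$-fixed line, $U$ the unipotent radical), on which $B$ acts by a character $\chi$; the map $V \mapsto \chi$ from simple modules to characters of $B$ trivial on $U$ is injective, and each such character of the torus $\mathbb{F}_q^\times \times \mathbb{F}_q^\times$ of order dividing $q-1$ is realized, namely by some $L_n(m)$. Since the characters of the diagonal torus are parameterized by $(\mathbb{Z}/(q-1))^2$, and the pairs $(n,m)$ with $0 \le n \le q-1$, $m \in \mathbb{Z}/(q-1)$ hit each such character exactly once (after the harmless identification at the $n=q-1$ endpoint), we conclude the $L_n(m)$ exhaust all irreducibles. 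The main obstacle I anticipate is the bookkeeping at the boundary value $n = q-1$ and making the $B$-socle/highest-weight argument fully rigorous over the non-algebraically-closed field $\mathbb{F}_q$ rather than over $E$; everything else is a direct transcription of \cite{Bon}, Chapter 10, which is why the paper simply cites it.
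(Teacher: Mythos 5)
The paper gives no proof of this proposition --- it is quoted from Theorem 10.1.8 of \cite{Bon} --- so your attempt can only be judged on its own terms. The first part of your outline (irreducibility of each $L_n(m)$ via Steinberg's tensor product theorem applied to $L_n \cong \otimes_i S_{n_i}^{[i]}$ with $p$-restricted digits, plus determinant twists) is the standard and correct route. The problem lies in how you handle distinctness and exhaustion, and it is concentrated at the value $n = q-1$.

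The representation $L_{q-1} = \otimes_{i=0}^{f-1} S_{p-1}^{[i]}$ is the Steinberg representation, of dimension $q$. It is not ``trivial as an $\operatorname{SL}_2$-representation'', and it is not isomorphic to any $L_0(m)$, for dimension reasons alone. There is no ``harmless identification at the $n=q-1$ endpoint'': all $q(q-1)$ representations $L_n(m)$ with $0 \leq n \leq q-1$ and $m \in \mathbb{Z}/(q-1)\mathbb{Z}$ are pairwise non-isomorphic, and the proposition needs every one of them, since $\operatorname{GL}_2(\mathbb{F}_q)$ has exactly $q(q-1)$ $p$-regular (equivalently, semisimple) conjugacy classes. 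This same point breaks your exhaustion argument: the map sending a simple module $V$ to the character of $B$ on its $U$-fixed line is \emph{not} injective. The torus $\operatorname{diag}(a,d)$ acts on the line $E x^n \subset L_n(m)$ by $a^{n+m}d^m$, and since $a^{q-1}=1$ the pairs $(0,m)$ and $(q-1,m)$ give the same character of $B$; more bluntly, there are only $(q-1)^2$ characters of the torus but $q(q-1)$ simple modules, so no such injection can exist. (The correct statement in the Curtis--Richen classification requires the torus character \emph{together with} extra combinatorial data, which is exactly what separates the trivial module from Steinberg.) To repair the argument: get pairwise non-isomorphism by restricting to $\operatorname{SL}_2(\mathbb{F}_q)$, where the $L_n$ for $0 \leq n \leq q-1$ are the $q$ distinct simple modules (this recovers $n$), and then read off $m$ from the $T$-character $a^{n+m}d^m$ on the $U$-fixed line; get exhaustion by comparing the resulting count $q(q-1)$ with the number of $p$-regular conjugacy classes, or by invoking Steinberg's restriction theorem from the algebraic group.
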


Moreover the following property holds (Proposition 10.1.18 and Paragraph
10.2.1 of \cite{Bon}):

\begin{prop}
\label{LS}
For any $0 \leq n \leq q-1$, $L_n$ is a multiplicity one constituent
of $S_n$. The other irreducible constituents in $S_n$ are among the
representations $L_i(j)$ with $i < n$.
\end{prop}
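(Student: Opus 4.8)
The plan is to induct on the number $d$ of base-$p$ digits of $n$. When $d=1$, i.e.\ $0\le n\le p-1$, the description in this paragraph gives $S_n=L_n$, which is irreducible by the preceding proposition, so there is nothing to prove. Assume $d\ge 2$ and write $n=n_0+p\tilde n$ with $0\le n_0\le p-1$ and $\tilde n=\lfloor n/p\rfloor$; then $\tilde n$ and $\tilde n-1$ both lie in $[0,q-1]$ and have at most $d-1$ base-$p$ digits, so the inductive hypothesis applies to $S_{\tilde n}$ and $S_{\tilde n-1}$.

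The engine of the proof is the standard two-step filtration of $S_n$ in characteristic $p$. The map $E[x,y]_{n_0}\otimes E[u,v]_{\tilde n}\to E[x,y]_n$ sending $P(x,y)\otimes Q(u,v)$ to $P(x,y)\,Q(x^p,y^p)$ is $G$-equivariant for the Frobenius-twisted action on the second factor, and a monomial count shows it is injective with image the sub-$G$-representation $A$ of $S_n$ spanned by the $x^sy^{n-s}$ with $s\bmod p\le n_0$; thus $A\cong S_{n_0}\otimes S_{\tilde n}^{[1]}$ and $\dim S_n/A=(p-1-n_0)\tilde n$. The classical computation (carried out for $\operatorname{SL}_2(\mathbb{F}_q)$ in \cite{Bon}) then identifies $S_n/A$, up to a twist by a power of the determinant, with $S_{p-2-n_0}\otimes S_{\tilde n-1}^{[1]}$; this is vacuous when $n_0=p-1$, where $A=S_n$. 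I expect this identification of the quotient graded piece — pinning down the $G$-action on the monomials $x^sy^{n-s}$ with $s\bmod p>n_0$ — to be the one genuinely computational point; everything else is formal.

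It remains to read off the composition factors of the two graded pieces using Steinberg's tensor product theorem and the bound $n\le q-1$. By the inductive hypothesis $S_{\tilde n}$ contains $L_{\tilde n}$ with multiplicity one, the other constituents being $L_i(j)$ with $i<\tilde n$; since $\tilde n\le p^{f-1}-1$ all digits involved sit in positions $<f-1$, so Frobenius twisting incurs no wrap-around and $S_{\tilde n}^{[1]}$ has the multiplicity-one constituent $L_{p\tilde n}$ together with constituents $L_{pi}(\,\cdot\,)$, $i<\tilde n$. Tensoring with the irreducible $S_{n_0}=L_{n_0}$, whose single base-$p$ digit is $n_0<p$, and applying Steinberg's theorem, $A$ has $L_{n_0+p\tilde n}=L_n$ as a multiplicity-one constituent and all other constituents of the form $L_{i'}(j')$ with $i'=n_0+pi<n$. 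The identical argument applied to $S_{p-2-n_0}\otimes S_{\tilde n-1}^{[1]}$ shows every constituent of $S_n/A$ has the shape $L_{i'}(j')$ with $i'\le(p-2-n_0)+p(\tilde n-1)=p\tilde n-n_0-2<n$, and twisting by a power of the determinant changes $j'$ but not $i'$. Since $L_n$ therefore occurs once in $A$ and not at all in $S_n/A$, it occurs in $S_n$ with multiplicity one, while every other constituent of $S_n$ is a constituent of $A$ or of $S_n/A$, hence some $L_i(j)$ with $i<n$; this closes the induction. The main obstacle is thus the identification of the quotient graded piece with (a twist of) $S_{p-2-n_0}\otimes S_{\tilde n-1}^{[1]}$; granted that, the digit bookkeeping — which relies essentially on $0\le n\le q-1$ to rule out Frobenius wrap-around — finishes the proof.
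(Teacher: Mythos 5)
Your argument is correct. The paper offers no proof of this proposition --- it simply cites Proposition 10.1.18 and Paragraph 10.2.1 of \cite{Bon} --- so there is nothing to compare line by line; what you have written is the standard self-contained derivation, and it holds up. The subspace $A$ is indeed a subrepresentation, being the image of the $G$-equivariant map $S_{n_0}\otimes S_{\tilde n}^{[1]}\to S_n$, and the one step you defer, the identification of $S_n/A$ with a determinant twist of $S_{p-2-n_0}\otimes S_{\tilde n-1}^{[1]}$, is exactly Glover's short exact sequence (\cite{Glo}, reproved in \cite{Bon}); the twist is by $\det^{n_0+1}$, which, as you note, does not affect the index $i'$ of a constituent $L_{i'}(j')$. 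The digit bookkeeping is also sound: the hypothesis $n\le q-1$ forces $\tilde n\le p^{f-1}-1$, so the Frobenius twist sends $L_i$ to $L_{pi}$ with no cyclic wrap-around, Steinberg's theorem (the paper's identification $L_n\cong\otimes_i S_{n_i}^{[i]}$) applies to $L_{n_0}\otimes L_{pi}$ because the base-$p$ digits occupy disjoint positions, the only constituent of $A$ with first index $n$ is $L_n$ itself (with multiplicity one, inherited from the multiplicity-one occurrence of $L_{\tilde n}$ in $S_{\tilde n}$), and every constituent of the quotient has first index at most $p\tilde n-n_0-2<n$. The induction therefore closes as claimed.
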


\subsection{Frobenius action and $\theta$ operators}

The Frobenius acts as a permutation on the set 
$\{L_n(m),0\leq n \leq q-1,m \in \mathbb{Z}/(q-1)\mathbb{Z}\}$. 
In order to describe this permutation
we introduce an operator on 
$\{0,\dots,q-1\}$ and $\mathbb{Z}/(q-1)\mathbb{Z}$ that we denote by $\theta$. 
On $\mathbb{Z}/(q-1)\mathbb{Z}$, $\theta$ is simply multiplication by $p$.
Let $n$ be an element of 
$\{0,\dots,q-1\}$. We write it in base $p$ as
$n = \sum_{i=0}^{f-1} a_ip^i$
and we set
$\theta n = a_{f-1} + a_0p + \dots a_{f-2}p^{f-1}$.
These $\theta$ operators are compatible with respect
to reduction modulo $q-1$.
We now have $L_n(m)^{[1]} = L_{\theta n}(\theta m)$. 

\subsection{The Grothendieck group of representations of $G$
in characteristic $p$}

Let $\operatorname{Rep}_E(G)$ be the Grothendieck group of finite-dimensional
representations of $G$ with coefficients in $E$. We denote by $[V]$ the
image in $\operatorname{Rep}_E(G)$ of a representation $V$ of $G$.

$\operatorname{Rep}_E(G)$ is a ring with the product given by $[V \otimes W] = [V][W]$. 
The twist by a power of the determinant and the action of Frobenius extend
to $\operatorname{Rep}_E(G)$, and we denote $[V(i)]$ by $[V](i)$ and
$[V^{[j]}]$ by $[V]^{[j]}$. 

We can endow $\operatorname{Rep}_E(G)$ with two natural bases: 
The first one, which we call basis $L$, is given by the irreducible
representations of $G$, that is the 
$[L_n(m)]$ for $0 \leq n \leq q-1$ and $m \in \mathbb{Z}/(q-1)\mathbb{Z}$.
Note that tensoring an element of $L$ by a power of the determinant or
twisting it by a power of Frobenius transforms this element into another
element of $L$.
The second one, which we call basis $S$, is given by the 
$[S_n(m)]$ for $0\leq n \leq q-1$ and $m \in \mathbb{Z}/(q-1)\mathbb{Z}$.
That it is indeed a basis follows from Proposition \ref{LS}.

We denote by $[V : L_n(m)]$ the coefficient of $[L_n(m)]$
when we write an element $V \in \operatorname{Rep}_E(G)$ in the basis $L$.

Finally let 
$R = \operatorname{Rep}_E(G)\otimes_{\mathbb{Z}}\mathbb{R}$.
All the notations that we have just defined extend to $R$.

\subsection{Recurrence formula for tensor products}

The following formula holds for the representations
$S_n = \operatorname{Symm}^n(\mathcal{W})$ of $G$:

\begin{prop}
\label{produittens}
For all nonnegative $n,m$ we have
$[S_n][S_m] = [S_{n+m}]+[S_{n-1}][S_{m-1}](1)$.
\end{prop}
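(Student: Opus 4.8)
The plan is to prove this identity by exhibiting an explicit short exact sequence of $G$-representations
\[
0 \to S_{n-1} \otimes S_{m-1} \otimes {\det} \to S_n \otimes S_m \to S_{n+m} \to 0,
\]
which at the level of Grothendieck classes immediately gives $[S_n][S_m] = [S_{n+m}] + [S_{n-1}][S_{m-1}](1)$, with the convention that $S_{-1} = 0$ (so the formula is trivial when $n=0$ or $m=0$). The surjection $S_n \otimes S_m \to S_{n+m}$ is the natural multiplication map on polynomials: identifying $S_n$ with $E[x,y]_n$, the map sends $P \otimes Q \mapsto PQ$. One checks directly that this is $G$-equivariant, since the $G$-action on $E[x,y]_n$ is by substitution $(g\cdot P)(x,y) = P(ax+cy, bx+dy)$ and substitution is a ring homomorphism, so $g\cdot(PQ) = (g\cdot P)(g\cdot Q)$. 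It is clearly surjective because any monomial $x^i y^{n+m-i}$ of degree $n+m$ can be split as a product of a degree-$n$ monomial and a degree-$m$ monomial (here one uses $n, m \ge 0$, and when both are positive the splitting is possible for every exponent).

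The main point is to identify the kernel. A dimension count gives $\dim \ker = (n+1)(m+1) - (n+m+1) = nm$, which matches $\dim(S_{n-1}\otimes S_{m-1}) = nm$, so it suffices to write down a $G$-equivariant injection $S_{n-1}\otimes S_{m-1} \otimes \det \hookrightarrow S_n \otimes S_m$ landing in the kernel. The natural candidate is built from the $G$-invariant element $\omega = x \otimes y - y \otimes x \in \mathcal{W}\otimes\mathcal{W}$, which spans a copy of $\det$ (one verifies $g\cdot(x\otimes y - y\otimes x) = (\det g)(x\otimes y - y\otimes x)$ by a direct $2\times 2$ computation). Concretely I would define the map $S_{n-1}\otimes S_{m-1} \to S_n \otimes S_m$ on polynomials by
\[
P(x,y) \otimes Q(x,y) \;\longmapsto\; \bigl(x\, P(x,y)\bigr)\otimes\bigl(y\,Q(x,y)\bigr) - \bigl(y\,P(x,y)\bigr)\otimes\bigl(x\,Q(x,y)\bigr),
\]
i.e. multiply the first factor by $x$, the second by $y$, and antisymmetrize; this is the composite of $\mathrm{id}\otimes\omega\otimes\mathrm{id}$ followed by the multiplication maps $S_{n-1}\otimes S_1 \to S_n$ and $S_1 \otimes S_{m-1} \to S_m$ in the appropriate factors, hence $G$-equivariant up to the $\det$-twist coming from $\omega$. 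Composing with the multiplication map $S_n\otimes S_m \to S_{n+m}$ yields $(xP)(yQ) - (yP)(xQ) = xyPQ - xyPQ = 0$, so the image lies in the kernel.

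The remaining obstacle — and the one genuine verification — is injectivity of this second map, equivalently that its image has dimension exactly $nm$. I would prove this by a direct argument on monomials: order monomials suitably (say by the $x$-exponent of the first tensor factor) and check that the images of the basis monomials $x^a y^{b} \otimes x^c y^{d}$ of $S_{n-1}\otimes S_{m-1}$ are linearly independent, using that the two terms $x^{a+1}y^b \otimes x^c y^{d+1}$ and $x^a y^{b+1}\otimes x^{c+1}y^d$ have distinct bidegree patterns across the factors unless $(a,b,c,d)$ are related in a way that a triangularity argument rules out. Alternatively, and perhaps more cleanly, one can avoid injectivity entirely: since we already know the multiplication map $S_n\otimes S_m \to S_{n+m}$ is surjective with kernel of dimension $nm$, and we have a $G$-equivariant map from $S_{n-1}\otimes S_{m-1}\otimes\det$ (also of dimension $nm$) into that kernel, it is enough to know this map is nonzero and that $S_{n-1}\otimes S_{m-1}\otimes\det$ has no proper subrepresentation of dimension... — but in characteristic $p$ that fails in general, so the honest route is the explicit linear-independence check, which is routine but is where the real work sits. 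Finally, passing to classes in $\operatorname{Rep}_E(G)$ and using additivity on short exact sequences together with $[V\otimes W] = [V][W]$ and the notation $[S_{n-1}\otimes S_{m-1}\otimes\det] = [S_{n-1}][S_{m-1}](1)$ gives the stated recurrence.
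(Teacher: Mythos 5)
Your proof is correct and is essentially the paper's own argument: the paper establishes the identity via the exact sequence $0 \to (S_{n-1}\otimes S_{m-1})(1) \to S_n\otimes S_m \to S_{n+m}\to 0$, with the same two maps you write down (multiplication, and $u\otimes v\mapsto xu\otimes yv - yu\otimes xv$), citing Glover for exactness where you sketch the routine linear-independence check.
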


\begin{proof}
We have in fact a more precise result which is proved in Glover's article
\cite{Glo}:
the following is an exact sequence:
$$
0 \to (S_{n-1}\otimes S_{m-1})(1) \to  
S_n\otimes S_m \to S_{n+m} \to 0,
$$
where the second arrow is given by 
multiplication of polynomials and the first one
is given by
$u\otimes v \mapsto (xu\otimes yv)-(yu\otimes xv)$.
\end{proof}

\subsection{The Dixon invariant}

Let $\Theta = xy^q - x^qy$. For all 
$g\in \operatorname{GL}_2(\mathbb{F}_q)$ we have
$g(\Theta) = (\det g)\Theta$.
For all $n \in \mathbb{Z}_{\geq 0}$ we can consider
the map
$\iota : S_n(1) \to S_{n+q+1}$
given by multiplication by $\Theta$.
This map is 
$\operatorname{GL}_2(\mathbb{F}_q)$-equivariant and injective.
We let
$\mathcal{V}_{n+2} = S_{n+q+1}/\iota S_n(1)$.

\subsection{Principal series representations}

\subsubsection{Definition}

Let $\mathcal{F}$ be the space of functions on $G$ with values in $\mathbb{F}_q$.
The group $G$ acts on $\mathcal{F}$ by $(gf)(\gamma) = f(\gamma g)$.
If $\chi$ is a character of $B$, we let $\mathcal{V}(\chi) = \operatorname{ind}_B^G\chi$.
It is the subspace of $\mathcal{F}$ formed by the functions satisfying
$f(bg) = \chi(b)f(g)$ for all $b\in B$ and $g\in G$. It is
an $\mathbb{F}_q$-vector space of dimension $q+1$.
The dual space $\mathcal{V}(\chi)^*$ of $\mathcal{V}(\chi)$ is isomorphic to
$\mathcal{V}(\chi^{-1})$.

For $r \in \mathbb{Z}$, we denote by $\lambda_r$ the character of $B$ with values
in $\mathbb{F}_q^*$ given by $\begin{pmatrix} a & b \\ 0 & d
\end{pmatrix}\mapsto d^r$. This character depends
only on $r$ modulo $q-1$. 
Let us note that all characters of $B$ with values in 
$\mathbb{F}_q^*$ are of the form $\lambda_r \otimes \det^i$. We will thus
restrict our study to the case of the characters $\lambda_r$.

\subsubsection{Relation to $[S_n(r)]$}

Let $n \geq 0$ be an integer. 
We define a $G$-equivariant morphism
$u_n : S_n = E[x,y]_n \to \mathcal{F}$ by $P \mapsto (g \mapsto P((0,1)g))$.
Its image is contained in $\mathcal{V}(\lambda_n)$. We have the following result:

\begin{lemm}
\label{iso_sp_sym}
Let $r \in \mathbb{Z}/(q-1)\mathbb{Z}$ and let $n \geq 0$ be an integer congruent to
$r$ modulo $q-1$. 
If $n \leq q$ then $u_n$ is an embedding
$S_n \to \mathcal{V}(\lambda_r)$, and it is an isomorphism if $n = q$. 
If $n \geq q+1$, then $u_n$ induces an isomorphism
$\mathcal{V}_{n-q+1} =  S_n/\iota S_{n-q-1}(1) \to \mathcal{V}(\lambda_r)$.
\end{lemm}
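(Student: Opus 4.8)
The strategy is to identify the kernel of $u_n$ explicitly and then conclude by comparing dimensions, using that $\dim\mathcal{V}(\lambda_r)=q+1$.

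First I would observe that, as $g$ ranges over $G$, the row vector $(0,1)g$ (the bottom row of $g$) ranges over all nonzero vectors of $\mathbb{F}_q^2$, since any nonzero vector can be completed to a basis of $\mathbb{F}_q^2$. Hence $u_n(P)=0$ if and only if $P\in E[x,y]_n$ vanishes at every point of $\mathbb{F}_q^2\setminus\{0\}$, which by homogeneity is equivalent to $P$ vanishing at each of the $q+1$ points of $\mathbb{P}^1(\mathbb{F}_q)$. Since $n\equiv r\pmod{q-1}$ we have $\lambda_n=\lambda_r$, so $\operatorname{im}u_n\subseteq\mathcal{V}(\lambda_r)$, as already noted.

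Suppose $n\leq q$. A nonzero element of $E[x,y]_n$ has at most $n$ zeros in $\mathbb{P}^1(E)$ (exactly $n$ counted with multiplicity), hence at most $n\leq q$ zeros among the $q+1$ points of $\mathbb{P}^1(\mathbb{F}_q)$; it therefore cannot vanish on all of $\mathbb{P}^1(\mathbb{F}_q)$. Thus $\ker u_n=0$ and $u_n$ is an embedding $S_n\hookrightarrow\mathcal{V}(\lambda_r)$; when $n=q$ the two spaces both have dimension $q+1$, so $u_n$ is an isomorphism. Suppose now $n\geq q+1$. Here I would use that $\Theta=xy^q-x^qy=xy\prod_{\lambda\in\mathbb{F}_q^*}(y-\lambda x)$ is, up to a nonzero scalar, the product of the $q+1$ pairwise non-proportional linear forms cutting out the points of $\mathbb{P}^1(\mathbb{F}_q)$. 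If $P\in E[x,y]_n$ vanishes on all of $\mathbb{P}^1(\mathbb{F}_q)$, then $P$ is divisible by each of these linear forms, hence (they being coprime in the UFD $E[x,y]$) by their product, so $P=\Theta Q$ with $Q\in E[x,y]_{n-q-1}$; conversely every such multiple of $\Theta$ lies in $\ker u_n$. Therefore $\ker u_n=\Theta\cdot E[x,y]_{n-q-1}=\iota(S_{n-q-1}(1))$, and $u_n$ induces an injection $\mathcal{V}_{n-q+1}=S_n/\iota S_{n-q-1}(1)\hookrightarrow\mathcal{V}(\lambda_r)$; comparing dimensions, $\dim S_n/\iota S_{n-q-1}(1)=(n+1)-(n-q)=q+1=\dim\mathcal{V}(\lambda_r)$, so it is an isomorphism.

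The step requiring the most care — though it is not really an obstacle — is the kernel computation in the case $n\geq q+1$, which rests on the factorization of $\Theta$ into distinct $\mathbb{F}_q$-rational linear forms together with unique factorization in $E[x,y]$; the remaining arguments are dimension bookkeeping and the elementary bound on the number of roots of a binary form. One should only check that the identification $S_n/\iota S_{n-q-1}(1)=\mathcal{V}_{n-q+1}$ matches the definition $\mathcal{V}_{m+2}=S_{m+q+1}/\iota S_m(1)$ (take $m=n-q-1$).
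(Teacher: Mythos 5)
Your proof is correct and follows essentially the same route as the paper's: identify the kernel of $u_n$ as the multiples of $\Theta$ and conclude by comparing dimensions with $\dim\mathcal{V}(\lambda_r)=q+1$. You simply supply the details (root counting in $\mathbb{P}^1(E)$ and the factorization of $\Theta$ into the $q+1$ distinct $\mathbb{F}_q$-rational linear forms) that the paper leaves implicit.
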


\begin{proof}
The kernel of $u_n$ is the set of homogenous polynomials of
degree $n$ that are zero on  $\mathbb{F}_q^2$, that is, multiples of the
Dixon invariant $\Theta$. In particular, $u_n$ is injective when $n <
q+1$, and the kernel of $u_n$ is $\iota S_{n-q-1}(1)$ otherwise. 
Hence when $n \geq q+1$ the morphism $u_n$ induces an embedding of 
$\mathcal{V}_{n-q+1} =  S_n/\iota
S_{n-q-1}(1)$ into $\mathcal{V}(\lambda_r)$. As both spaces are of dimension
$q+1$, this is an isomorphism.
\end{proof}

In particular:
\begin{lemm}
For $n \geq 2$,
$\mathcal{V}_n$ and $\mathcal{V}(\lambda_n)$ are isomorphic and the isomorphism class
of $\mathcal{V}_n$ depends only on the class of $n$ modulo $q-1$.
\end{lemm}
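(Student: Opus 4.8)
The plan is to deduce both assertions directly from Lemma \ref{iso_sp_sym}, together with the observation (recorded just after the definition of $\lambda_r$) that $\lambda_r$ depends only on $r$ modulo $q-1$. No genuinely new argument is needed; the work is entirely a matter of matching indices.

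First I would fix $n \geq 2$ and put $n' = n + q - 1$, so that $n' \geq q+1$ and $n' \equiv n \pmod{q-1}$. Writing $r$ for the class of $n$ in $\mathbb{Z}/(q-1)\mathbb{Z}$, the last case of Lemma \ref{iso_sp_sym} applied to $n'$ (legitimate precisely because $n' \geq q+1$ and $n' \equiv r$) says that $u_{n'}$ induces an isomorphism $S_{n'}/\iota S_{n'-q-1}(1) \to \mathcal{V}(\lambda_r)$. It then remains to identify the source with $\mathcal{V}_n$: by the definition $\mathcal{V}_{m+2} = S_{m+q+1}/\iota S_m(1)$, taking $m = n-2$ gives $\mathcal{V}_n = S_{n+q-1}/\iota S_{n-2}(1) = S_{n'}/\iota S_{n'-q-1}(1)$, which is exactly the source above. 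Hence $\mathcal{V}_n \cong \mathcal{V}(\lambda_r) = \mathcal{V}(\lambda_n)$.

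For the second assertion I would simply note that $\mathcal{V}(\lambda_r) = \operatorname{ind}_B^G \lambda_r$ depends, up to isomorphism, only on the character $\lambda_r$, which in turn depends only on $r = n \bmod (q-1)$; combined with the isomorphism just established, this shows that the isomorphism class of $\mathcal{V}_n$ depends only on the class of $n$ modulo $q-1$.

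There is essentially no obstacle here — the statement is a bookkeeping corollary of Lemma \ref{iso_sp_sym}. The only point that requires a moment of care is the index arithmetic: checking that the hypothesis $n \geq 2$ is exactly what forces $n' = n + q - 1 \geq q+1$, so that one lands in the quotient case of Lemma \ref{iso_sp_sym} rather than the embedding case, and that the shift built into the notation $\mathcal{V}_{n+2} = S_{n+q+1}/\iota S_n(1)$ is accounted for correctly.
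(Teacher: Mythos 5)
Your proof is correct, and it is exactly the argument the paper intends: the lemma is stated as an immediate consequence ("In particular") of Lemma \ref{iso_sp_sym}, obtained by applying its last case to $n+q-1\geq q+1$ and unwinding the index shift in the definition $\mathcal{V}_{m+2}=S_{m+q+1}/\iota S_m(1)$. Your index bookkeeping checks out.
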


In the following we will use the notation $\mathcal{V}_n$ for $\mathcal{V}(\lambda_n)$
also in the case where $n$ is an element of $\mathbb{Z}/(q-1)\mathbb{Z}$ and not
necessarily an integer $\geq 2$.

We can then recover a classical result (that we won't use in the sequel)
using the fact that $S_n^*$ is isomorphic to $S_n(-n)$ and that
$S_n$ and $S_{q-1-n}(n)$ have no common constituent
if $n \leq q-1$ (see Proposition \ref{Diamond}):

\begin{prop}
\label{decomp_sp}
For all $0 \leq n \leq q-1$ there is 
an exact sequence
$0 \to S_n \to \mathcal{V}(\lambda_n) \to S_{q-1-n}(n) \to 0$. It is
split when $n = 0$ or ${q-1}$.
\end{prop}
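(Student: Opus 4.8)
The plan is to construct the exact sequence directly from the maps $u_n$ of Lemma \ref{iso_sp_sym}, using duality to identify the quotient. First, since $n \le q-1 \le q$, Lemma \ref{iso_sp_sym} says that $u_n$ is an embedding $S_n \hookrightarrow \mathcal{V}(\lambda_n)$, and likewise $u_{q-1-n}$ embeds $S_{q-1-n}$ into $\mathcal{V}(\lambda_{q-1-n})$. Dualizing the second embedding and using the facts recalled in the text — $\mathcal{V}(\chi)^* \cong \mathcal{V}(\chi^{-1})$, together with $\lambda_{q-1-n}^{-1} = \lambda_n$ and $S_m^* \cong S_m(-m)$ (all indices and twists being read modulo $q-1$, so that $-(q-1-n)\equiv n$) — I obtain a $G$-equivariant surjection $\pi\colon \mathcal{V}(\lambda_n) \twoheadrightarrow S_{q-1-n}(n)$, whose kernel has dimension $(q+1)-(q-n) = n+1 = \dim S_n$.

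Next I would identify this kernel with $u_n(S_n)$. The composite $\pi\circ u_n$ is a $G$-equivariant map $S_n \to S_{q-1-n}(n)$ between representations which by Proposition \ref{Diamond} share no irreducible constituent; hence it vanishes, so $u_n(S_n)\subseteq\ker\pi$, and the dimension count forces equality. This yields the exact sequence $0 \to S_n \xrightarrow{u_n} \mathcal{V}(\lambda_n) \xrightarrow{\pi} S_{q-1-n}(n) \to 0$.

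For the splitting at $n=0$ and $n=q-1$ I would observe that in both cases $\mathcal{V}(\lambda_n) = \mathcal{V}(\lambda_0) = \operatorname{ind}_B^G\mathbf 1$, which we identify with the space $\mathbb F_q[\mathbb P^1(\mathbb F_q)]$ of functions on $\mathbb P^1(\mathbb F_q) = B\backslash G$. When $n=0$, $u_0(S_0)$ is the line of constant functions, and the $G$-equivariant ``sum of values'' functional $f \mapsto \sum_x f(x)$ restricts on the constants to multiplication by $q+1$, which equals $1$ in $\mathbb F_q$; it is therefore a $G$-equivariant retraction, so the sequence splits and $\mathbb F_q[\mathbb P^1(\mathbb F_q)] \cong S_0 \oplus S_{q-1}$. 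Since $S_{q-1} = L_{q-1}$ is irreducible (of dimension $q$) and not isomorphic to $S_0$, this module has a unique subrepresentation isomorphic to $S_{q-1}$; as the $n=q-1$ sequence has the same underlying module, its sub-object $u_{q-1}(S_{q-1})$ must be that subrepresentation, and its quotient $S_0 = S_0(q-1)$ is the complementary constant-functions summand, so that sequence splits as well.

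The step I expect to be the main obstacle is the clean identification of the quotient: keeping track of the twist by the determinant and the index modulo $q-1$ so that the dual of $u_{q-1-n}$ lands exactly on $S_{q-1-n}(n)$, and then invoking the ``no common constituent'' property (Proposition \ref{Diamond}) to conclude that $u_n(S_n)$ is the entire kernel of $\pi$ rather than a proper subspace. The splitting assertions, by contrast, come down to the elementary facts about $\mathbb F_q[\mathbb P^1(\mathbb F_q)]$ and the congruence $q+1\equiv 1 \pmod p$.
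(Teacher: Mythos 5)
Your proof is correct and follows exactly the route the paper sketches in the sentence preceding the proposition: embed $S_n$ via $u_n$, obtain the surjection onto $S_{q-1-n}(n)$ by dualizing $u_{q-1-n}$ using $\mathcal{V}(\chi)^*\cong\mathcal{V}(\chi^{-1})$ and $S_m^*\cong S_m(-m)$, and identify the kernel with $u_n(S_n)$ via the no-common-constituent fact from Proposition \ref{Diamond} plus a dimension count. Your explicit splitting argument for $n=0$ and $n=q-1$ (the sum-of-values retraction, using $q+1\equiv 1\pmod p$, and semisimplicity of $S_0\oplus L_{q-1}$) is also correct and supplies a detail the paper leaves unproved.
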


\subsubsection{Constituents of principal series representations}

The constituents of the representations $[\mathcal{V}_r]$ have been computed by
Diamond in the article \cite{Dia}. We give here a reformulation of this
description.
Consider the following graph:

\begin{equation}
\label{graphe1}
\begin{tikzpicture}[->,>={angle 90},
every node/.style={rectangle,draw,inner sep=7pt}]
\node (A) at (0,2) {$x \mapsto x$}; 
\node (B) at (4,2) {$x \mapsto p-1-x$}; 
\node (C) at (0,0) {$x \mapsto x-1$}; 
\node (D) at (4,0) {$x \mapsto p-2-x$};
\path
(A) edge [loop above] (A)
(B) edge [loop above] (B)
(C) edge (A)
(A) edge (D)
(B) edge (C)
(D) edge (B)
(C) edge [bend left=10] (D)
(D) edge [bend left=10] (C);
\end{tikzpicture}
\end{equation}

Let $\mathcal{C}_f$ be the set of closed paths of length $f$
in graph (\ref{graphe1}). 
Let $c$ be such a path.
We denote by $c_i$ the vertex reached at  the $i$-th step of the path.
We thus get a list $c_0,\dots,c_{f-1},c_f = c_0$ of vertices of the graph.

To each path $c$ we attach a family of functions
$(\lambda^c_i)_{0 \leq i \leq f-1}$ by taking for
$\lambda^c_i$ the function written on the vertex $c_i$.
We also define a function $\ell_c$, by
$\ell_c(x_0,\dots,x_{f-1}) = 
(\sum_ip^i(x_i - \lambda^c_i(x_i)))/2$ if the vertex 
$c_{f-1}$ is in the left column of the graph,
and  $\ell_c(x_0,\dots,x_{f-1}) = 
(p^f-1 + \sum_ip^i(x_i-\lambda^c_i(x_i))/2$ if the vertex $c_{f-1}$ 
is in the right column.

Let $0 \leq n < q-1$. We write $n$ in base $p$ as 
$n = \sum_{i=0}^{f-1}p^in_i$.
Let $\lambda^c(n) = \sum_{i=0}^{f-1}p^i\lambda^c_i(n_i)$, and
$\ell_c(n) = \ell_c(n_0,\dots,n_{f-1})$. 
Observe that $\lambda^c(n) = \lambda^{c'}(n)$
and $\ell_c(n) = \ell_{c'}(n)$ if and only if $c = c'$.

Diamond's result is the following:
\begin{prop}
\label{Diamond}
$$
[\mathcal{V}_n(m)] = \sum [L_{\lambda^c(n)}(m+\ell_c(n))]
$$
where the sum is over all $c \in \mathcal{C}_f$ 
such that $0 \leq \lambda^c_i(n_i) \leq p-1$ for all $i$.
In particular, all constituents appear with multiplicity $1$.
\end{prop}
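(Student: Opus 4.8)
The plan is to reduce the statement to a recursive computation of the class of a single symmetric power in the basis $L$, and then to recognise graph (\ref{graphe1}) as the "transfer graph" of that recursion. First I would reduce to $m=0$: since $[\mathcal{V}_n(m)]=[\mathcal{V}_n](m)$ and $[L_{\lambda^c(n)}(\ell_c(n))](m)=[L_{\lambda^c(n)}(m+\ell_c(n))]$, it suffices to prove the formula for $[\mathcal{V}_n]$. Next, by Proposition \ref{decomp_sp} one has the exact relation $[\mathcal{V}_n]=[S_n]+[S_{q-1-n}](n)$ in $R$ for all $0\le n\le q-1$, so the whole problem comes down to writing $[S_N]$ in the basis $L$ for $0\le N\le q-1$.

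The engine for that is a recursion on the base-$p$ expansion. Iterating Glover's exact sequence (Proposition \ref{produittens}, i.e. the results of \cite{Glo}) together with the $G$-equivariant Frobenius-twisted multiplication map $S_a\otimes S_b^{[1]}\to S_{a+pb}$, whose kernel and cokernel one identifies explicitly, should give, for $0\le a\le p-1$ and $b\ge 0$, a relation of the shape
\[
[S_{a+pb}] = [S_a]\,[S_b]^{[1]} + [S_{p-2-a}](a+1)\,[S_{b-1}]^{[1]}
\]
in $R$ (with $[S_{-1}]=0$), and moreover $[S_a]=[L_a]$ because $a\le p-1$. Unfolding this $f$ times for $0\le N\le q-1$: at step $i$ one selects one of the two summands, which records a "borrow" bit in $\{0,1\}$ and emits a local irreducible factor $L_{e_i}^{[i]}$ (times a determinant twist by $e_i+1$ or nothing), where $e_i$ is one of $n_i$, $p-2-n_i$ (if the incoming borrow is $0$), or $n_i-1$, $p-1-n_i$ (if it is $1$); a term survives only if every $e_i\ge 0$ and if the borrow returns to $0$ at the end (otherwise a factor $S_{-1}=0$ appears). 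The Steinberg factorisation $L_n=\otimes_i S_{n_i}^{[i]}$ then collapses the product $\otimes_i L_{e_i}^{[i]}$ of emitted irreducibles into the single irreducible $L_{\sum_i e_ip^i}$. One now reads off graph (\ref{graphe1}): the four vertices are the pairs (incoming borrow, outgoing borrow); $A$ and $B$, the vertices carrying a loop, are the two borrow-preserving states; the left (resp. right) column is "outgoing borrow $0$" (resp. "$1$"); the edges are exactly the transitions the recursion allows; the four functions $x\mapsto x$, $x\mapsto p-2-x$, $x\mapsto x-1$, $x\mapsto p-1-x$ are the possible $e_i$ as functions of $n_i$; the constraint $0\le\lambda^c_i(n_i)\le p-1$ is precisely "the emitted factor is nonzero" (the upper bound being automatic); $\ell_c$ accumulates the twists; and combining the $[S_n]$-piece (which gives the walks whose seam between step $f-1$ and step $0$ carries borrow $0$, i.e. $c_{f-1}$ in the left column) with the $[S_{q-1-n}](n)$-piece (borrow $1$ at the seam, i.e. $c_{f-1}$ in the right column, reparametrised via the digits $p-1-n_i$) produces exactly the sum over closed paths of length $f$, the $p^f-1$ term in the second case of $\ell_c$ being the contribution of the extra $(n)$-twist.

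The part that needs real care — and is the main obstacle — is the exact bookkeeping of determinant twists and signs: (i) verifying the precise recursion above, including the twist $a+1$ and the exact overflow term, from \cite{Glo}; (ii) checking that the two pieces $[S_n]$ and $[S_{q-1-n}](n)$ reassemble to the closed-path sum with every coefficient equal to $+1$ (multiplicity one then follows since, as noted in the text, distinct closed paths give distinct pairs $(\lambda^c(n),\ell_c(n))$); and (iii) deriving the two-case formula for $\ell_c$ together with the $p^f-1$ correction. For all of this it is convenient to settle $f=1$ first — there the only closed paths of length $1$ are the loops at $A$ and $B$, and one recovers a two-term formula compatible with Proposition \ref{decomp_sp} — and then propagate, using $V^{[f]}=V$ and $L_n(m)^{[1]}=L_{\theta n}(\theta m)$ to wrap exponents of Frobenius around the cycle. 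Alternatively, one can simply check that graph (\ref{graphe1}) together with the functions $\lambda^c$, $\ell_c$ reproduces the combinatorial description of Diamond in \cite{Dia}, or compare the Brauer characters of the two sides on the $p$-regular conjugacy classes of $\operatorname{GL}_2(\mathbb{F}_q)$ (the only ones that matter), which turns the identity into an explicit equality between a sum over walks in the graph and the character of an induced representation.
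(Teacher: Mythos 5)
The paper gives no proof of this proposition at all: it is stated as a reformulation of Diamond's computation in \cite{Dia}, and the reader is sent there. Your proposal is therefore a genuinely different, self-contained route, and its skeleton is correct. The recursion you posit is indeed Glover's identity: for $0\le a\le p-1$ there is a short exact sequence $0 \to L_a\otimes S_b^{[1]} \to S_{a+pb} \to S_{p-2-a}(a+1)\otimes S_{b-1}^{[1]} \to 0$, valid already for the algebraic group and hence after restriction to $G$. Your reading of graph (\ref{graphe1}) as a transfer graph is exactly right: the vertices $x\mapsto x$, $x\mapsto p-1-x$, $x\mapsto x-1$, $x\mapsto p-2-x$ are the borrow states $(0,0)$, $(1,1)$, $(1,0)$, $(0,1)$ (incoming, outgoing), and the eight edges are precisely the pairs with matching outgoing/incoming bits. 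The closed paths with $c_{f-1}$ in the left column give the terms of $[S_n]$ (no wrap-around borrow), and those with $c_{f-1}$ in the right column match the terms of $[S_{q-1-n}](n)$ via the column-swapping graph automorphism induced by precomposing each vertex function with $x\mapsto p-1-x$; the $f=1$ check comes out as $[\mathcal{V}_n]=[L_n]+[L_{p-1-n}(n)]$ as it should. What the citation buys the paper is brevity; what your route buys is an actual proof, plus the decomposition of every $[S_N]$, $N\le q-1$, in the basis $L$ as a by-product.

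Two points need attention. First, a logical one: you invoke Proposition \ref{decomp_sp} as an input, but in the paper that proposition is \emph{deduced from} Proposition \ref{Diamond} (the text recovers it using the fact that $S_n$ and $S_{q-1-n}(n)$ have no common constituent, with an explicit pointer to Proposition \ref{Diamond}). As written your argument is therefore circular relative to the paper; you must prove the Grothendieck-group identity $[\mathcal{V}(\lambda_n)]=[S_n]+[S_{q-1-n}(n)]$ independently, e.g.\ by comparing Brauer characters on the $p$-regular (hence semisimple) classes of $G$ using the induction formula for $\operatorname{ind}_B^G\lambda_n$ --- a finite classical computation. Second, a bookkeeping slip inside the part you defer: the determinant twist emitted by the second branch is by (effective input digit)$+1$, which equals $p-1-e_i$ in terms of the emitted digit $e_i$, not $e_i+1$ as you wrote. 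With that correction the accumulated twist $\sum p^i(a_i+1)$ over branch-two steps does reproduce $\ell_c(n)$ in both the left- and right-column cases (the $p^f-1$ term is exactly what compensates the wrap-around of the borrow at the seam), so the discrepancy is harmless provided the promised care is actually taken.
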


\subsubsection{Representations having $[L_n(m)]$ as a constituent}
\label{apparition}

Let $A(n,m)$ be the set of $(n',m') \in \{0,\dots,q-2\}\times \mathbb{Z}/(q-1)\mathbb{Z}$
such that $L_n(m)$ is a constituent of $\mathcal{V}_{n'}(m')$. 
We give a description of $A(n,m)$.
Consider the following graph:

\begin{equation}
\label{graphe2}
\begin{tikzpicture}[->,>={angle 90},
every node/.style={rectangle,draw,inner sep=7pt}]
\node (A) at (0,2) {$x \mapsto x$}; 
\node (B) at (4,2) {$x \mapsto p-1-x$}; 
\node (C) at (0,0) {$x \mapsto x+1$}; 
\node (D) at (4,0) {$x \mapsto p-2-x$};
\path
(A) edge [loop above] (A)
(B) edge [loop above] (B)
(C) edge (A)
(A) edge (D)
(B) edge (C)
(D) edge (B)
(C) edge [bend left=10] (D)
(D) edge [bend left=10] (C);
\end{tikzpicture}
\end{equation}

Let $\mathcal{A}_f$ be the set of closed paths of length
$f$ in graph (\ref{graphe2}). 
Let $c$ be such a path and write $c_i$ for the vertex reached
at step $i$. We attach to $c$ a family of functions
$(\mu^c_i)_{0 \leq i \leq f-1}$ where
$\mu^c_i$ is the function written on the vertex $c_i$.

We say that $c$ is compatible with $n = \sum_i n_ip^i$ if
$0 \leq \mu^c_i(n_i) \leq p-1$
for all $i$. In this case let $\mu^c(n) = \sum \mu^c_i(n_i)p^i$

\begin{prop}
\label{antecedents}
The set $A(n,m)$ is the set of $(\mu^c(n),m-\ell_c(\mu^c(n)))$, 
for $c$ in the set of
elements of $\mathcal{A}_f$ that are compatible with $n$ 
and such that $\mu^c(n) \neq q-1$. 
Distinct $c$'s give distinct elements in $A(n,m)$.
The cardinality of $A(n,m)$ doesn't depend on $m$.
\end{prop}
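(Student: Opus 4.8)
The plan is to obtain this by directly inverting Diamond's formula, Proposition~\ref{Diamond}. The key---and essentially only---observation is that the graphs (\ref{graphe1}) and (\ref{graphe2}) are the \emph{same} directed graph: they have identical vertex sets and identical edge sets, the sole difference being the function attached to the bottom-left vertex, which reads $x\mapsto x-1$ in the first and $x\mapsto x+1$ in the second. Hence $\mathcal{C}_f=\mathcal{A}_f$ as sets of closed paths of length $f$, and for each vertex $v$ the function attached to $v$ in (\ref{graphe2}) is the compositional inverse, as a map $\mathbb{Z}\to\mathbb{Z}$, of the one attached to $v$ in (\ref{graphe1}): the maps $x\mapsto x$, $x\mapsto p-1-x$ and $x\mapsto p-2-x$ are involutions, while $x\mapsto x-1$ and $x\mapsto x+1$ are mutually inverse. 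Thus for every path $c$ and index $i$ we have $\mu^c_i=(\lambda^c_i)^{-1}$, so that $\lambda^c_i(\mu^c_i(x))=x$ for all $x\in\mathbb{Z}$.

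Next I would fix $(n',m')\in\{0,\dots,q-2\}\times\mathbb{Z}/(q-1)\mathbb{Z}$, write $n'=\sum_i p^i n'_i$ in base $p$, and apply Proposition~\ref{Diamond} to $\mathcal{V}_{n'}(m')$: the representation $L_n(m)$ is a constituent of $\mathcal{V}_{n'}(m')$ if and only if there is a path $c\in\mathcal{C}_f$ with $0\le\lambda^c_i(n'_i)\le p-1$ for all $i$ such that $\lambda^c(n')=n$ and $m'+\ell_c(n')=m$. Since the base-$p$ digits $n_i$ of $n$ already lie in $\{0,\dots,p-1\}$, the equality $\lambda^c(n')=n$ together with the bounds $0\le\lambda^c_i(n'_i)\le p-1$ is equivalent to $\lambda^c_i(n'_i)=n_i$ for all $i$, i.e.\ to $n'_i=\mu^c_i(n_i)$ for all $i$, i.e.\ to $n'=\mu^c(n)$; and for this to define an element of $\{0,\dots,q-1\}$ whose base-$p$ digits are the $n'_i$ one needs exactly $0\le\mu^c_i(n_i)\le p-1$ for all $i$, which is the compatibility of $c$ with $n$. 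The remaining relation then reads $m'=m-\ell_c(\mu^c(n))$, and excluding the single value $\mu^c(n)=q-1$ matches the requirement $n'\in\{0,\dots,q-2\}$. This gives the asserted description of $A(n,m)$.

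Finally I would dispose of the injectivity claim and the $m$-independence of $|A(n,m)|$. For injectivity, if admissible paths $c,c'\in\mathcal{A}_f$ produce the same pair, put $n'=\mu^c(n)=\mu^{c'}(n)\le q-2$; applying $\lambda^c_i$, resp.\ $\lambda^{c'}_i$, to the digits of $n'$ gives $\lambda^c(n')=n=\lambda^{c'}(n')$, and the equality of second coordinates gives $\ell_c(n')=\ell_{c'}(n')$, so $c=c'$ by the observation recorded just before Proposition~\ref{Diamond}. For the last point, the set of admissible $c$ depends only on $n$, and replacing $m$ by $m+s$ just translates the second coordinate of every element of $A(n,m)$ by $s$, so $|A(n,m)|$ does not depend on $m$. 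The whole argument is bookkeeping once the identification of the two graphs is made; the only step requiring a little care is the translation of the digit-range constraints in Diamond's formula into the compatibility condition plus the exclusion of $q-1$.
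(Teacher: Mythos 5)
Your proof is correct and follows the same route as the paper: identify the two graphs, note that $\mu^c_i$ and $\lambda^c_i$ are inverse functions so that $\lambda^c(n')=n$ iff $\mu^c(n)=n'$, invert Diamond's formula, and deduce injectivity from the uniqueness of $(\lambda^c(n),\ell_c(n))$. You simply spell out the digit-bound bookkeeping that the paper leaves implicit.
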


\begin{proof}
We denote by the same letter $c$ a path in graph (\ref{graphe2})
and the analogous path in graph (\ref{graphe1}). Then $\mu^c$ and
$\lambda^c$ are inverse functions, so that $\lambda^c(n') = n$ if
and only if $\mu^c(n) = n'$. Hence the description of $A(n,m)$. In
particular, the cardinality of $A(n,m)$ doesn't depend on $m$.
The fact that distinct $c$'s give distinct elements in $A(n,m)$
follows from the fact that 
$\lambda^c(n) = \lambda^{c'}(n)$
and $\ell_c(n) = \ell_{c'}(n)$ if and only if $c = c'$.
\end{proof}

We denote by $\omega(n)$ the cardinality of $A(n,m)$.
Let $0 \leq n \leq q-1$ be written as $\sum_i n_ip^i$ in base $p$. 
We denote by $r_n$ the number of coefficients $n_i$ that are
equal to $p-1$. Then :

\begin{prop}
\label{calcul_omegan}
\begin{enumerate}
\item
$\omega(0) = 2^f-1$
\item
if $n \neq 0$ then $\omega(n) = 2^{f-r_n}$
\end{enumerate}
\end{prop}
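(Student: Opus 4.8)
The plan is to count, for a given $n = \sum_i n_i p^i$ with $0 \le n \le q-1$, the number of closed paths $c \in \mathcal{A}_f$ of length $f$ in graph (\ref{graphe2}) that are compatible with $n$, and then subtract off the contribution of the forbidden value $\mu^c(n) = q-1$. By Proposition \ref{antecedents}, $\omega(n)$ is the number of such compatible paths, minus the number of compatible $c$ with $\mu^c(n) = q-1$. So the first step is a purely combinatorial analysis of the graph: it has four vertices, which I will label $A$ (top-left, $x \mapsto x$), $B$ (top-right, $x \mapsto p-1-x$), $C$ (bottom-left, $x \mapsto x+1$), $D$ (bottom-right, $x \mapsto p-2-x$), with edges $A \to A$, $B \to B$, $C \to A$, $A \to D$, $B \to C$, $D \to B$, $C \to D$, $D \to C$. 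A closed path of length $f$ is a function $i \mapsto c_i$ with $c_f = c_0$ respecting these edges. The key observation is that each step the path either stays in the same column (via the loops $A \to A$, $B \to B$, or the horizontal edges $C \to D$, $A \to D$ reading columns, wait—let me instead think in terms of which transitions are available) — more precisely, I would track at each index $i$ whether the path is in the left column $\{A,C\}$ or the right column $\{B,D\}$, and within that, whether it is in the top row or bottom row.

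The second step is to see how compatibility with $n$ constrains the path. At index $i$, if $c_i$ is a top vertex ($A$ or $B$) the constraint $0 \le \mu^c_i(n_i) \le p-1$ is automatic: for $A$ it reads $0 \le n_i \le p-1$, always true; for $B$ it reads $0 \le p-1-n_i \le p-1$, always true. If $c_i = C$ the constraint is $0 \le n_i + 1 \le p-1$, i.e. $n_i \ne p-1$; if $c_i = D$ it reads $0 \le p-2-n_i \le p-1$, i.e. $n_i \ne p-1$. So the constraint is: whenever the path is in the bottom row at index $i$, we must have $n_i \ne p-1$. Now I analyze the structure of closed paths. Looking at the edges, from a bottom vertex $C$ you can go to $A$ (top, same column) or to $D$ (bottom, other column); from $D$ you can go to $B$ (top, same column) or to $C$ (bottom, other column). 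From $A$ you can go to $A$ (stay) or $D$ (bottom, other column); from $B$ you go to $B$ (stay) or $C$ (bottom, other column). So a path decomposes into "runs": maximal blocks of consecutive top vertices (each such block sits in a single column since $A \to A$, $B \to B$ are the only top-to-top edges) separated by bottom vertices, and the bottom portions form strings that alternate columns ($C \to D \to C \to \cdots$). I would count these paths by a transfer-matrix / direct combinatorial argument: essentially, at each index we choose whether $i$ is a "top" index or a "bottom" index, subject to the rule that a maximal run of top indices forces a fixed column throughout that run and the transitions between runs are forced; this should give, when there is no compatibility constraint, exactly $2^f$ closed paths (the factor $2^f$ coming from $f$ binary choices, e.g. at each step a choice of "stay top vs. descend" appropriately reorganized — this is where I expect the bookkeeping to be delicate and where I must be careful about the closed-path condition $c_f = c_0$, which removes or adjusts by lower-order terms).

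The third step handles the compatibility constraint and the exclusion of $\mu^c(n) = q-1$. If $n \ne 0$, write $r_n$ for the number of indices $i$ with $n_i = p-1$. The constraint says those $r_n$ indices must be "top" indices. I claim this multiplies the count by $2^{-r_n}$ in the appropriate sense, giving $2^{f - r_n}$ compatible closed paths, and then I must check that none of them has $\mu^c(n) = q-1$: indeed $\mu^c(n) = q - 1$ would force $\mu^c_i(n_i) = p-1$ for all $i$, which for a top vertex $A$ means $n_i = p-1$ and for $B$ means $n_i = 0$, and at a bottom vertex is impossible; combined with $n \neq 0$ and the forced-top condition at the $p-1$ digits, one checks this cannot be realized by a legal closed path — so $\omega(n) = 2^{f-r_n}$ when $n \neq 0$. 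For $n = 0$, every digit is $0 \ne p-1$, so all $2^f$ paths are compatible, but now $\mu^c(0) = q-1$ is attainable: this happens exactly for the unique path that is at vertex $B$ at every index (since $B$ sends $0 \mapsto p-1$ and $B \to B$ is a valid closed path), giving exactly one excluded path, hence $\omega(0) = 2^f - 1$.

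The main obstacle will be making the transfer-matrix count of closed paths in graph (\ref{graphe2}) genuinely rigorous — in particular correctly accounting for the cyclic closure condition $c_f = c_0$ (a naive "$f$ independent binary choices" heuristic overcounts or undercounts the truly closed paths), and then verifying cleanly that imposing "top at the $r_n$ special indices" cuts the count down by exactly the factor $2^{r_n}$ rather than something more complicated depending on the positions of those indices. I would organize this by setting up the $4 \times 4$ adjacency matrix $M$ of the graph, observing that $\#\mathcal{A}_f = \operatorname{tr}(M^f)$, diagonalizing or block-triangularizing $M$ (its eigenvalues, I expect, being $1, 1, \pm$ something small, with the two eigenvalues $1$ accounting for the loops at $A$ and $B$), and similarly computing the constrained trace by replacing, at each special index $i$, the matrix $M$ by the matrix $M'$ obtained by deleting the rows/columns indexed by $C$ and $D$ (forcing a top vertex there). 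The identity $\omega(n) = 2^{f-r_n}$ for $n \neq 0$ and $\omega(0) = 2^f - 1$ should then drop out of evaluating these (products of) matrices, with the "$-1$" in the $n=0$ case traced precisely to the excluded all-$B$ path.
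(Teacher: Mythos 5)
Your overall strategy --- count the closed paths of graph (\ref{graphe2}) compatible with $n$, then discard those with $\mu^c(n)=q-1$ --- is exactly the paper's, and your identification of the compatibility constraint (a bottom-row vertex at index $i$ forces $n_i\neq p-1$) is correct. For the counting step the paper avoids your transfer-matrix machinery by observing that between any ordered pair of vertices there is exactly one path of length $2$ (equivalently, the square of the adjacency matrix is the all-ones matrix); this gives $\#\mathcal{A}_f=2^f$ at once and, after rotating so that the last index is unconstrained, the factor $\tfrac12$ per constrained index. That part of your plan would go through when $r_n<f$.

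There are, however, two concrete errors in your treatment of the exclusion set, and they collide in the case $n=q-1$ (all digits equal to $p-1$), which is part of the statement for $n\neq 0$. First, your claim that $\mu^c_i(n_i)=p-1$ is impossible at a bottom vertex is wrong for the vertex $x\mapsto x+1$: there it happens exactly when $n_i=p-2$. The correct argument is that a closed path touching the bottom row must pass through the vertex $x\mapsto p-2-x$ (going backwards around the cycle from an occurrence of $x\mapsto x+1$, the predecessors are forced to be $x\mapsto p-1-x$ until one reaches $x\mapsto p-2-x$, and one cannot go all the way around on $x\mapsto p-1-x$ alone), and at that vertex $\mu^c_i(n_i)\leq p-2$ always; hence any $c$ with $\mu^c(n)=q-1$ is one of the two constant top-row paths. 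Second, and more seriously: for $n=q-1$ the constant path at $x\mapsto x$ is compatible and satisfies $\mu^c(q-1)=q-1$, so your assertion that no compatible path is excluded when $n\neq0$ is false; simultaneously your count of $2^{f-r_n}$ compatible paths is also false there, since when $r_n=f$ the compatible paths are exactly the closed paths contained in the top row, of which there are $2$ (the two constant ones --- there is no edge between the two top vertices), not $2^{0}=1$. The two mistakes cancel to give the correct value $\omega(q-1)=1$, but as written your argument does not establish this case; the paper treats $n=q-1$ separately for precisely this reason. Your analyses of $n=0$ and of $0<n<q-1$ are fine once the bottom-row argument is repaired.
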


\begin{proof}
Computing $\omega(n)$ is the same as counting elements $c \in
\mathcal{A}_f$ that are compatible with $n$ 
and such that $\mu^c(n) \neq q-1$. 

We have to understand for which cases we have $\mu^c(n) = q-1$.
This can happen only if path $c$ never reaches the vertex labelled
$x \mapsto p-2-x$, so $c$ is necessarily a constant path staying either
on the vertex $x \mapsto x$ or on the vertex $x \mapsto p-1-x$.
For such a $c$ we have $\mu^c(n)=q-1$ if and only if $n = 0$ or $n=q-1$.

For each pair of vertices of graph
(\ref{graphe2}) there is exactly one path of length $2$ from the first
point to the second one. Hence the cardinality of $\mathcal{A}_f$ is
$2^f$: we have $4$ choices for the first vertex of path $c$, $2$ choices
for each following step except for step $f-1$ where we have only one
possibility that allows us to close the path at step $f$.

If $r_n < f$ there exist exactly $2^{f-r_n}$ elements of
$\mathcal{A}_f$ that are compatible with $n$. 
Indeed we can assume without loss of generality that $n_{f-1} < p-1$.
Then for all $i$ such that $n_i = p-1$ the number of possible paths is
divided by $2$, as the path cannot reach the lower row of graph
(\ref{graphe2}) at step $i$.

If $n = q-1$, the paths compatible with $n$ are those that avoid
the lower row of graph \ref{graphe2}. Hence there are only $2$ compatible
paths: the constant paths staying either on the vertex labelled
$x \mapsto x$ or on the vertex labelled $x \mapsto p-1-x$. 
The first one gives $\mu^c(q-1) = q-1$. So we get $\omega(q-1) = 1$. 

\end{proof}

\section{Asymptotic study of representations}
\label{section_asymptote}

\subsection{Central characters}

Let $Z$ be the center of $G$. 
Using the embedding $\mathbb{F}_q \to E$ that we fixed in Paragraph \ref{notations},
we can identify the group of characters of $Z$ to $\mathbb{Z}/(q-1)\mathbb{Z}$ so that
$i \in \mathbb{Z}/(q-1)\mathbb{Z}$ corresponds to $\begin{pmatrix} a & 0
\\ 0 & a \end{pmatrix}\mapsto a^i$.

Let $V$ be a representation of $G$. If is has a central character, we denote it
by $\alpha(V)$.

\subsection{The family $(\mathcal{S}_{\alpha})$}

Let $\mathcal{S} = \frac{1}{q^2-1}\sum_{\chi : B \to \mathbb{F}_q^*}[\mathcal{V}(\chi)] \in R$.
Let $i$ be an element of $\mathbb{Z}/(q-1)\mathbb{Z}$ which we see as a character of $Z$.
Let
$\mathcal{S}_i = \frac{1}{q^2-1}\sum_{\chi : B \to \mathbb{F}_q^*, \chi_{|Z} = i}[\mathcal{V}(\chi)]$.
We have normalized $\mathcal{S}_i$ in such a way that $\dim \mathcal{S}_i = 1$, 
and by construction
$\alpha(\mathcal{S}_i) = i$.

Note that $\mathcal{V}(\chi)^{[j]} = \mathcal{V}(p^j\chi)$, and 
$\alpha(p^j\chi) = p^j\alpha(\chi)$. 
Thus $\mathcal{S}_i^{[j]} = \mathcal{S}_{\theta^ji}$.
We also have $\mathcal{S}_i(j) = \mathcal{S}_{i+2j}$.

\subsection{Norms on $R$}
\label{norms}

We endow the $\mathbb{R}$-algebra
$R = \operatorname{Rep}_E(G)\otimes_{\mathbb{Z}}\mathbb{R}$
with a norm $\|\cdot\|$
that satisfies the following properties:
\begin{enumerate}
\item
it is an algebra norm: that is, for all $V$ and $W$, we have
the inequality
$\| V \cdot W \| \leq \| V\| \| W\|$
\item
it is unchanged by twist by the determinant
\item
it is unchanged under the action of Frobenius
\item
for all $V \in R$ and  $n \in \{0,\dots,q-1\}$ and
$m \in \mathbb{Z}/(q-1)\mathbb{Z}$, we have $|[V:L_n(m)]| \leq \|V\|$
\end{enumerate}

Such a norm can be constructed as follows: first endow $R$ with the
norm $|\cdot\nolinebreak |_{L,\infty}$ defined by
$|V|_{L,\infty} = \max_{n,m}|[V:L_n(m)]|$.
This norm is unchanged by twist by the determinant and action of
Frobenius as these actions permute the elements of the basis $L$.
For all $V \in R$ we set 
$$
\|V\| =
\sup_{W\neq 0}|V\cdot W|_{L,\infty}/|W|_{L,\infty}
$$ 
which is finite as $R$ is a finite dimensional $\mathbb{R}$-vector space and
multiplication by $V$ is $\mathbb{R}$-linear.
Then 
$\|\cdot\|$ is a norm on $R$ that clearly satisfies the first three
properties. That it also satisfies the last one can be seen by noticing
that $\|V \| \geq |V\cdot L_0|_{L,\infty} = |V|_{L,\infty}$. 

We also need another norm on $R$ that we denote by $|\cdot |_{S,1}$.
An element $V$ of $R$ can be written
$V = \sum_{n,m} \lambda_{n,m}[S_n(m)]$ in the basis $S$. We then set
$|V|_{S,1} = \sum_{n,m} |\lambda_{n,m}|$.

As $R$ is a finite-dimension $\mathbb{R}$-vector space, all norms on it are
equivalent. Hence there exists a positive real number 
$M$ such that for all $V \in R$
we have $|V|_{S,1} \leq M\|V\|$.

\subsection{Statement of the results}

We devote the rest of Section
\ref{section_asymptote}
to the proof of the following theorem and its corollary:

\begin{theo}
\label{asymptote}
Let $r \geq 1$ be an integer. There exists a constant $C_r$ depending
only on $r$, such that for any representation $W$ of $G$ with a central
character, and any representation $U$ of the form
$U = S_{k_1}^{[j_1]} \otimes \dots \otimes S_{k_r}^{[j_r]}$,
the representation $V = W\otimes U$ satisfies
$[V] = (\dim V)\mathcal{S}_{\alpha(V)} + r_V$ with $\|r_V\|
\leq C_r|W|_{S,1}(\dim U)/\min_i(k_i+1)$.
\end{theo}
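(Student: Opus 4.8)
The plan is to reduce everything to a single key estimate about the class of one symmetric power $S_k^{[j]}$ twisted by an arbitrary character, and then to use the recurrence formula of Proposition \ref{produittens} together with the multiplicativity of the norm $\|\cdot\|$ to handle the $r$-fold tensor product $U = S_{k_1}^{[j_1]}\otimes\dots\otimes S_{k_r}^{[j_r]}$. First I would observe that it suffices to prove the statement when $W$ is a single basis element $[S_n(m)]$: a general $W$ with central character decomposes in the basis $S$ as $W = \sum_{n,m}\lambda_{n,m}[S_n(m)]$, and since $W$ has a central character only the $(n,m)$ with a fixed value of $n+2m$ modulo $q-1$ occur; the error terms add up with total weight $\sum|\lambda_{n,m}| = |W|_{S,1}$, and the main terms all coincide because $\mathcal{S}_{\alpha}$ depends only on the central character, which is the same for all these $[S_n(m)]$ and hence for $V = W\otimes U$. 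Twisting by $\det^m$ and applying Frobenius twists (which permute basis $L$ and fix $\|\cdot\|$, and send $\mathcal{S}_i$ to $\mathcal{S}_{\theta^j i}$ and $\mathcal{S}_i(j)$ to $\mathcal{S}_{i+2j}$), I can further normalize to the case $W = [S_n]$ with $0\le n\le q-1$.

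The heart of the matter is then the following one-variable claim: there is a constant $C$ such that for every $k\ge 0$, every $j$, and every $0\le n\le q-1$,
\[
[S_n][S_k^{[j]}] = (n+1)(k+1)\,\mathcal{S}_{\alpha} + s,\qquad \|s\|\le C\,\frac{k+1}{\,k+1\,}\cdot\frac{(n+1)(k+1)}{k+1},
\]
more precisely with $\|s\|\le C'(k+1)$ where $C'$ depends only on $q$ (recall $n\le q-1$), so the relative error is $O(1/(k+1))$. To prove this I would use Lemma \ref{iso_sp_sym}: for $k\ge q+1$, $S_k$ has the same class as $\mathcal{V}_{k-q+1}$ up to the class $[\iota S_{k-q-1}(1)] = [S_{k-q-1}(1)]$, i.e. $[S_k] = [\mathcal{V}_{k}] + [S_{k-q-1}(1)]$ in $R$ (reading indices mod $q-1$). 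Iterating this descent, $[S_k]$ is an alternating sum of roughly $k/(q+1)$ principal series classes $[\mathcal{V}_r(*)]$ together with one bounded tail term $[S_{k'}(*)]$ with $k'\le q$. Each principal series class, after averaging, is close to a multiple of some $\mathcal{S}_i$: indeed I would show directly that $[\mathcal{V}(\chi)] = (q+1)\mathcal{S}_{\chi|_Z} + (\text{bounded error})$ uniformly in $\chi$, using that summing $[\mathcal{V}(\chi)]$ over the $(q-1)$ characters with a fixed restriction to $Z$ gives $(q^2-1)\mathcal{S}_i$ by definition, and that the $q-1$ individual terms differ from their average by a bounded amount in $\|\cdot\|$ (there are only finitely many characters $\chi$, all with bounded $\|[\mathcal{V}(\chi)]\|$). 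Multiplying through by $[S_n]$ (which has bounded norm since $n\le q-1$) and collecting: the count of principal series pieces is $\frac{k+1}{q+1}+O(1)$, each contributes $(q+1)$ times an $\mathcal{S}$-class times $[S_n]$, and one checks that $[S_n]\mathcal{S}_i = (n+1)\mathcal{S}_{i+n}$ or similar — in any case a multiple of the appropriate $\mathcal{S}_{\alpha(V)}$ with the dimension count working out to $(n+1)(k+1)$ — while all the $O(1)$'s, multiplied by the bounded $\|[S_n]\|$ and by the bounded per-piece errors, sum to $O(k+1)$. The central character bookkeeping is automatic since all operations (tensoring with $[S_n]$, twisting, averaging within a fixed $Z$-restriction) respect it.

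Finally I would run the induction on $r$. Writing $U = S_{k_1}^{[j_1]}\otimes U'$ with $U' = S_{k_2}^{[j_2]}\otimes\dots\otimes S_{k_r}^{[j_r]}$, apply the inductive hypothesis to $W\otimes S_{k_1}^{[j_1]}$ — but this $W'$ no longer has a central character in general only if $W$ didn't; it does, so $[W\otimes S_{k_1}^{[j_1]}] = \dim(\cdot)\mathcal{S}_{\alpha} + r'$ with $\|r'\|\le C_1|W|_{S,1}\dim(S_{k_1})/(k_1+1) = C_1|W|_{S,1}$ — hmm, better: multiply the inductive estimate for $V' = W\otimes U'$ (error $\le C_{r-1}|W|_{S,1}(\dim U')/\min_{i\ge 2}(k_i+1)$) by $[S_{k_1}^{[j_1]}]$ and also use the $r=1$ estimate. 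Concretely, $[V] = [S_{k_1}^{[j_1]}]\cdot[V']$; substitute $[V'] = (\dim V')\mathcal{S}_{\alpha(V')} + r_{V'}$; the term $(\dim V')[S_{k_1}^{[j_1]}]\mathcal{S}_{\alpha(V')}$ is handled by the $r=1$ computation of $[S_{k_1}^{[j_1]}]\mathcal{S}_i$ (a multiple of $\mathcal{S}_{\alpha(V)}$ giving the correct main term $(\dim V)\mathcal{S}_{\alpha(V)}$ plus error $O((\dim V')(k_1+1)/(k_1+1)) = O(\dim V / (k_1+1))$-ish after tracking dimensions), and the term $[S_{k_1}^{[j_1]}]\cdot r_{V'}$ has norm $\le \|[S_{k_1}^{[j_1]}]\|\cdot\|r_{V'}\|$. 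The obstacle here is that $\|[S_{k_1}^{[j_1]}]\| = \|[S_{k_1}]\|$ is \emph{not} bounded — it grows like $k_1+1$ roughly (e.g. $[S_{k_1}:L_0]$ can be of size $O(1)$ but products blow it up). So the naive submultiplicative bound gives an error with an extra factor $(k_1+1)$, which is too lossy. The fix, and the main difficulty to get right, is to not extract the main term first: instead write $[S_{k_1}^{[j_1]}] = (k_1+1)\mathcal{S}_{i_1} + s_1$ with $\|s_1\|$ \emph{bounded} (this is the $r=1$ case with $W = [S_0]$ trivial, $n=0$!), so that $[V] = (k_1+1)\mathcal{S}_{i_1}\cdot[V'] + s_1\cdot[V']$; now $\|s_1\cdot[V']\|\le \|s_1\|\,\|[V']\|$ and $\|[V']\| = \|[W\otimes U']\|\le |W|_{S,1}\cdot\prod_{i\ge 2}\|[S_{k_i}]\|$, which again has those bad factors. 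So the genuinely careful argument must instead control $\|[V']\|$ directly from its own main-term decomposition: $\|[V']\|\le (\dim V')\|\mathcal{S}_{\alpha}\| + \|r_{V'}\|\le C(\dim V')$, which \emph{is} under control. Then $\|s_1\cdot[V']\|\le C\dim V' = C\dim U'\cdot\dim W\cdot(\text{abuse})$, and $(k_1+1)\mathcal{S}_{i_1}[V'] = (k_1+1)(\dim V')\mathcal{S}_{i_1}\mathcal{S}_{\alpha(V')} + (k_1+1)\mathcal{S}_{i_1}r_{V'}$; the last term has norm $\le (k_1+1)\|\mathcal{S}_{i_1}\|\,\|r_{V'}\| \le (k_1+1)C_{r-1}|W|_{S,1}(\dim U')/\min_{i\ge 2}(k_i+1)$, and since $(k_1+1)\dim U' = \dim U$ this is exactly $C_{r-1}|W|_{S,1}(\dim U)/\min_{i\ge 2}(k_i+1)$, which we must finally compare to the target bound $C_r|W|_{S,1}(\dim U)/\min_{i}(k_i+1)$ — this is fine if $\min_i(k_i+1)$ is achieved at $i\ge 2$, but if it is achieved at $i=1$ we instead peel off the smallest factor first, so I would order the $k_i$ and always induct by removing a factor that is \emph{not} the minimal one (possible as long as $r\ge 2$), leaving the minimal one untouched until the base case. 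The product $\mathcal{S}_{i_1}\mathcal{S}_{\alpha(V')}$ needs to be evaluated: $\mathcal{S}_a\cdot\mathcal{S}_b$ should again be a multiple of $\mathcal{S}_{a+b}$ of the right dimension (dimension $\dim\mathcal{S}_a\cdot\dim\mathcal{S}_b = 1$, wait — that forces $\mathcal{S}_a\mathcal{S}_b = \mathcal{S}_{a+b}$ up to an error, since both sides have the same central character $a+b$ and the same dimension $1$; more carefully $\mathcal{S}_a\mathcal{S}_b - \mathcal{S}_{a+b}$ is a bounded element, which is all we need), so the accumulated main term telescopes to $(\dim V)\mathcal{S}_{\alpha(V)}$ and all the bounded-times-bounded corrections, summed over the $r$ steps of the induction, give a constant $C_r$ times $|W|_{S,1}(\dim U)/\min_i(k_i+1)$. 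The bookkeeping of these constants and the verification that $\mathcal{S}_a\mathcal{S}_b$ and $[S_n]\mathcal{S}_i$ are the expected multiples of $\mathcal{S}$-classes (up to bounded error, which one sees by applying the same averaging-over-a-$Z$-coset trick to the products) is the part requiring care.
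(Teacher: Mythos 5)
Your overall architecture matches the paper's (reduce to $W=S_n$ with $0\le n\le q-1$, establish a one-variable estimate via the descent $[S_{n+q+1}]=[S_n](1)+[\mathcal{V}_{n+2}]$, then handle the $r$-fold product), but your derivation of the key one-variable estimate has a genuine gap that the rest of the argument cannot survive. You approximate each of the roughly $k/(q+1)$ principal-series pieces $[\mathcal{V}_{k-2i}(i)]$ individually by $(q+1)\mathcal{S}_{k\bmod q-1}$ with a bounded per-piece error and then sum; as you yourself write, these errors ``sum to $O(k+1)$''. But the main term $(n+1)(k+1)\mathcal{S}_{\alpha}$ is also of size $O(k+1)$ (since $n\le q-1$), so an error of order $k+1$ is useless: for $r=1$ and $W=S_n$ the theorem demands $\|r_V\|\le C_1|W|_{S,1}(\dim U)/(k_1+1)=C_1$, an absolute \emph{constant}. (Your two displayed bounds, $\|s\|\le C(n+1)$ and $\|s\|\le C'(k+1)$, are inconsistent, and the argument you sketch supports only the second, insufficient one.) What is missing is a cancellation: the pieces $[\mathcal{V}_{k-2i}(i)]$ are $(q-1)$-periodic in $i$, and the sum over one \emph{complete} period is \emph{exactly} $(q^2-1)\mathcal{S}_{k\bmod q-1}$ --- precisely the identity you quote, but which you deploy only to justify the per-piece bound. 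Grouping the $u(q-1)+v$ principal-series terms into $u$ complete periods (contributing zero error) plus at most $q-2$ leftover terms (each with bounded error) is what yields $[S_k]=(k+1)\mathcal{S}_{k\bmod q-1}+O(1)$ with a constant independent of $k$; this is Proposition \ref{equivSk} and it is the engine of the whole proof.

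A second gap sits in your treatment of $\mathcal{S}_a\mathcal{S}_b$. You argue it equals $\mathcal{S}_{a+b}$ ``up to a bounded error, which is all we need'' because both sides have dimension $1$ and central character $a+b$. Neither half is right: equal dimension and central character do not force two elements of $R$ to coincide or be close, and a bounded but nonzero discrepancy $e=\mathcal{S}_a\mathcal{S}_b-\mathcal{S}_{a+b}$ gets multiplied by $\dim V$ (or by $(k_1+1)\dim V'$) in your main term, producing an error of size $(\dim V)\|e\|$, far exceeding the allowed $C_r|W|_{S,1}(\dim U)/\min_i(k_i+1)$. You need the exact identity $\mathcal{S}_a\mathcal{S}_b=\mathcal{S}_{a+b}$ of Proposition \ref{tensoriel_s}, which the paper obtains by comparing two asymptotic expansions of $[S_u][S_v]$ --- again resting on the corrected one-variable estimate. (Your observation that $[S_n]\mathcal{S}_i$ should be $(n+1)\mathcal{S}_{n+i}$ can indeed be made exact by Frobenius reciprocity plus the same full-period averaging, but it must be proved, not asserted.) Your strategy for the $r$-fold product --- peeling off one factor at a time, avoiding the unbounded norm $\|[S_{k_1}]\|$ by substituting the main-term decomposition first, and saving the minimal $k_i$ for last --- is sound in spirit and close to the paper's expansion over proper subsets $I\subsetneq\{1,\dots,r\}$, but it cannot be completed without the two exact ingredients above.
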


\begin{coro}
\label{coro_asymptote}
There exists a constant $C$ depending only on $K$ such that for any
representation $W$ of $G$ with a central character, and any
representation
$U$ of $G$ that is the reduction in characteristic $p$ of an irreducible
algebraic representation of $\operatorname{Res}_{K/\mathbb{Q}_p}\operatorname{GL}_2$, the representation
$V = W \otimes U$
satisfies $[V] = (\dim V)\mathcal{S}_{\alpha(V)} + r_V$ with
$\|r_V\| \leq C\|W\|(\dim U)^{1-1/h}$.
\end{coro}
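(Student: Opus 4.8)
The plan is to deduce the corollary from Theorem \ref{asymptote}. The subtlety is that $U$ \emph{is} essentially of the shape $S_{k_1}^{[j_1]}\otimes\cdots\otimes S_{k_r}^{[j_r]}$ covered by Theorem \ref{asymptote}, with $r=h$ factors, but a direct application only produces a bound of size $(\dim U)/\min_\tau(n_\tau+1)$, which is far too weak when the exponents $n_\tau$ are unbalanced. The key point will be to reshuffle the small tensor factors of $U$ into $W$ and to apply Theorem \ref{asymptote} only to the remaining large factors, after choosing the cut-off suitably; this is the heart of the argument.

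I would first reduce to a tensor product of symmetric powers. Let $\mathbb{F}_q$ be the residue field of $K$, $q=p^f$. Every $\tau\in H_K$ reduces modulo $p$ to an embedding $\mathbb{F}_q\to E$, hence to the composite of the fixed embedding with $F^{j_\tau}$ for a unique $j_\tau\in\{0,\dots,f-1\}$, so the reduction modulo $p$ of $\operatorname{Symm}^{n_\tau}W_\tau$ is $S_{n_\tau}^{[j_\tau]}$; consequently, if $U$ is the reduction of the irreducible algebraic representation with parameters $(n_\tau,m_\tau)_{\tau}$, then $[U]=[\bigotimes_{\tau\in H_K}S_{n_\tau}^{[j_\tau]}](a)$ in $R$ for some $a\in\mathbb{Z}/(q-1)\mathbb{Z}$ coming from the determinant twists. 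Because $\|\cdot\|$ and $\dim$ are invariant under twist by the determinant, while $\mathcal{S}_\alpha(j)=\mathcal{S}_{\alpha+2j}$ and $\alpha(V(j))=\alpha(V)+2j$, one has $r_{V(j)}=r_V(j)$, $\|r_{V(j)}\|=\|r_V\|$, and $\dim U(j)=\dim U$; so I may assume $U=\bigotimes_{\tau\in H_K}S_{n_\tau}^{[j_\tau]}$, and must produce a constant $C$ depending only on $K$ with $\|r_V\|\le C\|W\|N^{1-1/h}$, where $N=\dim U=\prod_{\tau\in H_K}(n_\tau+1)$.

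Next comes the decomposition. Put $T=\{\tau\in H_K:\ n_\tau+1\ge N^{1/h}\}$ and $S=H_K\setminus T$; then $T\ne\emptyset$, since otherwise $N=\prod_{\tau\in H_K}(n_\tau+1)<(N^{1/h})^h=N$. Write $U=U_S\otimes U_T$ with $U_S=\bigotimes_{\tau\in S}S_{n_\tau}^{[j_\tau]}$ and $U_T=\bigotimes_{\tau\in T}S_{n_\tau}^{[j_\tau]}$, and set $W'=W\otimes U_S$. Since scalar matrices act on $S_n=E[x,y]_n$ by the $n$-th power, each $S_n$ has a central character, hence so does $W'$; and $U_T$ has the form required by Theorem \ref{asymptote}, with $r=|T|\le h$. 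As $V=W'\otimes U_T$, Theorem \ref{asymptote} gives
$$
\|r_V\|\ \le\ C_{|T|}\,|W'|_{S,1}\,\frac{\dim U_T}{\min_{\tau\in T}(n_\tau+1)}\ \le\ C_h\,|W'|_{S,1}\,\frac{\dim U_T}{N^{1/h}},
$$
with $C_h=\max_{1\le r\le h}C_r$, using $\min_{\tau\in T}(n_\tau+1)\ge N^{1/h}$. It remains to bound $|W'|_{S,1}$. By the comparison of norms there is a real $M>0$ with $|W'|_{S,1}\le M\|W'\|\le M\|W\|\,\|U_S\|\le M\|W\|\prod_{\tau\in S}\|S_{n_\tau}\|$, since $\|\cdot\|$ is an algebra norm invariant under Frobenius. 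Now I claim $\|S_n\|\le A(n+1)$ for all $n\ge0$, with $A\ge1$ depending only on $q$: the exact sequence $0\to\iota S_n(1)\to S_{n+q+1}\to\mathcal{V}_{n+2}\to0$ gives $[S_{n+q+1}]=[S_n](1)+[\mathcal{V}_{n+2}]$ in $R$, hence $\|S_{n+q+1}\|\le\|S_n\|+\|\mathcal{V}_{n+2}\|$; as the classes $\mathcal{V}_r$ for $r\in\mathbb{Z}/(q-1)\mathbb{Z}$ and the representations $S_s$ for $0\le s\le q$ are finite in number, an induction in steps of $q+1$ gives the claim. Therefore $|W'|_{S,1}\le MA^h\|W\|\prod_{\tau\in S}(n_\tau+1)=MA^h\|W\|\dim U_S$, and since $\dim U_S\cdot\dim U_T=N$ we obtain $\|r_V\|\le C_hMA^h\|W\|N^{1-1/h}$; taking $C=C_hMA^h$, which depends only on $K$, concludes.

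The main obstacle, as indicated above, is precisely the unbalanced regime: the estimate coming directly out of Theorem \ref{asymptote} degrades to $(\dim U)/\min_\tau(n_\tau+1)$, and only the absorption of the factors with $n_\tau+1<N^{1/h}$ into $W$ — compensated by the elementary linear growth bound $\|S_n\|\le A(n+1)$ — brings it down to $(\dim U)^{1-1/h}$. Everything else is bookkeeping with the algebra norm and with twists by the determinant.
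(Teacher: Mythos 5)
Your proof is correct and follows essentially the same route as the paper: reduce to a tensor product of Frobenius-twisted symmetric powers, absorb the small factors into $W$, apply Theorem \ref{asymptote} to the remaining large factors, and control the cost with the algebra norm, the norm comparison $|\cdot|_{S,1}\le M\|\cdot\|$, and the linear bound $\|S_n\|=O(n+1)$. The only (immaterial) difference is the choice of the large part: the paper optimizes over subsets and takes the singleton containing the largest exponent, using $\max_i(k_i+1)\ge(\dim U)^{1/h}$, whereas you take all factors with $n_\tau+1\ge(\dim U)^{1/h}$; both yield the same bound.
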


\subsection{Computation of $[S_k]$}

Let $k = u(q^2-1) + v(q+1) + w$, where $u = \lfloor k/(q^2-1) \rfloor$,
and where $v(q+1)+w$ is the remainder 
of the euclidean division of $k$ by $q^2-1$,
and $w$ is the remainder of the division of $k$ by $q+1$.

Making use of the equality $[S_{n+q+1}] = [S_n](1) + [\mathcal{V}_{n+2}]$ we get
$[S_k] = \sum_{i=1}^{u(q-1)+v}[\mathcal{V}_{k-i(q+1)+2}](i-1) + [S_w](u(q-1) + v)$.

As $[\mathcal{V}_n]$ is $(q-1)$-periodic we have:
$$
\sum_{i=u(q-1)+1}^{u(q-1)+v}[\mathcal{V}_{k-i(q+1)+2}](i-1) = 
\sum_{i=1}^{v}[\mathcal{V}_{k-i(q+1)+2}(i-1)]
= \sum_{i=0}^{v-1}[\mathcal{V}_{k-2i}(i)]
$$
and
$$
\sum_{i=1}^{u(q-1)}[\mathcal{V}_{k-i(q+1)+2}](i-1) = 
u \sum_{i=1}^{q-1}[\mathcal{V}_{k-i(q+1)+2}(i-1)]
= u \sum_{i=0}^{q-2}[\mathcal{V}_{k-2i}(i)]
$$

Moreover
$[\sum_{i=0}^{q-2}V_{k-2i}(i)] =
(q^2-1)\mathcal{S}_{(k \bmod q-1)}$.
Let 
$$
r_k = (u(q^2-1)-k-1)\mathcal{S}_{(k \bmod q-1)}
+ \sum_{i=0}^{q-2}[\mathcal{V}_{k-2i}(i)]
+ [S_w](u(q-1)+v)
$$

We define the constant $A$ as follows:
$$
A = (q^2+2q)\max \{ \|[S_r]\|, 0 \leq r  < q^2-1, \|\mathcal{S}_i\|, i \in \mathbb{Z}/(q-1)\mathbb{Z} \}
$$
As $|u(q^2-1)-k-1| \leq q^2$ we get:

\begin{prop}
\label{equivSk}
$[S_k] = (k+1)\mathcal{S}_{(k \bmod q-1)} + r_k$ and $\|r_k\| \leq A$.
\end{prop}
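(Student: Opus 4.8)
The plan is to read the first identity off the chain of computations carried out just above the statement, and then to bound $\|r_k\|$ by estimating its three summands one at a time using the properties of the norm.

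For the identity, I would begin from $[S_k] = \sum_{i=1}^{u(q-1)+v}[\mathcal{V}_{k-i(q+1)+2}](i-1) + [S_w](u(q-1)+v)$, split the range of summation into $u$ consecutive blocks of length $q-1$ followed by a remaining block of length $v$, use the $(q-1)$-periodicity of $[\mathcal{V}_n]$ together with the reindexing $[\mathcal{V}_{k-i(q+1)+2}](i-1) = [\mathcal{V}_{k-2(i-1)}](i-1)$ (valid since $q+1\equiv 2$ modulo $q-1$), and collapse each of the $u$ full blocks using the identity $\sum_{i=0}^{q-2}[\mathcal{V}_{k-2i}(i)] = (q^2-1)\mathcal{S}_{(k\bmod q-1)}$. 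This produces $[S_k] = u(q^2-1)\mathcal{S}_{(k\bmod q-1)} + (\text{a residual sum of at most } q-1 \text{ } \mathcal{V}\text{-terms}) + [S_w](u(q-1)+v)$; subtracting $(k+1)\mathcal{S}_{(k\bmod q-1)}$ from both sides leaves exactly the element $r_k$ defined above.

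For the bound, I would use only that $\|\cdot\|$ is a norm that is invariant under twisting by the determinant, and treat the three pieces of $r_k$ in turn. The scalar satisfies $|u(q^2-1)-k-1|\le q^2$ because $0\le k-u(q^2-1)<q^2-1$ by the definition of $u$, so that piece contributes at most $q^2\max_i\|\mathcal{S}_i\|$. The residual $\mathcal{V}$-sum has at most $q-1$ terms, since $v(q+1)+w<q^2-1$ forces $v\le q-2$; by twist-invariance each term has norm $\|[\mathcal{V}_{k-2i}]\|$, which depends only on $k-2i$ modulo $q-1$, and choosing a suitable integer representative $n\ge q+1$ and applying Lemma \ref{iso_sp_sym} one writes $[\mathcal{V}_{k-2i}] = [S_n]-[S_{n-q-1}](1)$ with $n$ and $n-q-1$ both less than $q^2-1$, so this term is bounded by $2\max_{0\le r<q^2-1}\|[S_r]\|$. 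Finally $\|[S_w](u(q-1)+v)\| = \|[S_w]\|\le\max_{0\le r<q^2-1}\|[S_r]\|$ since $0\le w\le q$. Adding the three estimates gives $\|r_k\|\le q^2\max_i\|\mathcal{S}_i\| + (2q-1)\max_{0\le r<q^2-1}\|[S_r]\|\le (q^2+2q)\max\{\|[S_r]\|,\|\mathcal{S}_i\|\} = A$. The argument is essentially bookkeeping rather than anything deep; the one point that needs a little care is the reduction of $\|[\mathcal{V}_n]\|$ to the quantities $\|[S_r]\|$ that actually appear in the definition of $A$ (via Lemma \ref{iso_sp_sym}), keeping track of the determinant twists so that the invariance of the norm can be applied cleanly, and checking that the crude factor $q^2+2q$ really does dominate the term count.
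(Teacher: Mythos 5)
Your proposal is correct and follows exactly the paper's route: the identity is read off the telescoping computation preceding the statement, and the norm bound comes from $|u(q^2-1)-k-1|\le q^2$ together with the count of remaining terms against the factor $q^2+2q$ in the definition of $A$. You actually supply more detail than the paper does on the only nontrivial point --- reducing $\|[\mathcal{V}_{k-2i}]\|$ to the quantities $\|[S_r]\|$, $0\le r<q^2-1$, via Lemma \ref{iso_sp_sym} and twist-invariance --- and you correctly treat the middle piece of $r_k$ as the residual sum $\sum_{i=0}^{v-1}[\mathcal{V}_{k-2i}(i)]$ of at most $q-1$ terms (the paper's displayed formula for $r_k$ has a typo with upper limit $q-2$ in place of $v-1$).
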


\subsection{Action of Frobenius}

\begin{prop}
\label{frob}
$[S_k]^{[j]} = (k+1)\mathcal{S}_{\theta^j (k \bmod q-1)} + r_k^{[j]}$ 
and  $\|r_k^{[j]}\| \leq A$. 
\end{prop}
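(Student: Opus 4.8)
The statement is an immediate consequence of Proposition \ref{equivSk} together with the three stated properties of the norm $\|\cdot\|$ that are relevant here, namely that it is unchanged under the action of Frobenius and that it is an algebra norm. The plan is as follows. First I apply the Frobenius operator $(\cdot)^{[j]}$ to the identity of Proposition \ref{equivSk}, using that $(\cdot)^{[j]}$ is a ring homomorphism of $R$ (it extends the operation $V \mapsto V^{[j]}$ on representations, which is compatible with tensor products) and is $\mathbb{R}$-linear. This gives
$$
[S_k]^{[j]} = (k+1)\,\mathcal{S}_{(k \bmod q-1)}^{[j]} + r_k^{[j]}.
$$
Then I invoke the identity $\mathcal{S}_i^{[j]} = \mathcal{S}_{\theta^j i}$ established right after the definition of the family $(\mathcal{S}_\alpha)$, applied with $i = (k \bmod q - 1)$, to rewrite the main term as $(k+1)\,\mathcal{S}_{\theta^j(k \bmod q-1)}$. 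This yields the displayed equality of the proposition.

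It remains to bound $\|r_k^{[j]}\|$. Here I use property (3) of the norm from Paragraph \ref{norms}: the norm is unchanged under the action of Frobenius, so $\|r_k^{[j]}\| = \|r_k\|$. Combined with the bound $\|r_k\| \leq A$ from Proposition \ref{equivSk}, this gives $\|r_k^{[j]}\| \leq A$, as desired. One small point worth checking is that $\theta^j$ is well-defined on $\mathbb{Z}/(q-1)\mathbb{Z}$ — but this was already noted in the paragraph on $\theta$ operators, where $\theta$ on $\mathbb{Z}/(q-1)\mathbb{Z}$ is simply multiplication by $p$, and $\theta^f = \mathrm{id}$ since $q = p^f$; in particular the index $\theta^j(k \bmod q-1)$ makes sense for any integer $j$, matching the fact that $[S_k]^{[f]} = [S_k]$.

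There is essentially no obstacle here: the proposition is a formal corollary of Proposition \ref{equivSk} and the Frobenius-equivariance properties already recorded. The only thing one must be slightly careful about is that the definition of $r_k$ given before Proposition \ref{equivSk} is an explicit expression in terms of the $[\mathcal{V}_{k-2i}(i)]$, $\mathcal{S}_{(k\bmod q-1)}$ and $[S_w](u(q-1)+v)$; applying $(\cdot)^{[j]}$ to this expression term by term and then re-bounding would reprove $\|r_k^{[j]}\| \leq A$ directly, but it is cleaner simply to cite property (3) of the norm. I would therefore write the proof in three short lines: apply $(\cdot)^{[j]}$ to Proposition \ref{equivSk}, use $\mathcal{S}_i^{[j]} = \mathcal{S}_{\theta^j i}$, and use Frobenius-invariance of $\|\cdot\|$ to transport the bound $\|r_k\|\leq A$.
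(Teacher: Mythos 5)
Your proof is correct and is essentially identical to the paper's own argument: apply Frobenius to the identity of Proposition \ref{equivSk}, rewrite the main term via $\mathcal{S}_i^{[j]} = \mathcal{S}_{\theta^j i}$, and use the Frobenius-invariance of $\|\cdot\|$ to carry over the bound $\|r_k\|\leq A$. Nothing further is needed.
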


\begin{proof}
From $[S_k] = (k+1)\mathcal{S}_{(k \bmod q-1)} + r_k$ we get the equality
$[S_k]^{[j]} = (k+1)\mathcal{S}_{(k \bmod q-1)}^{[j]} + r_k^{[j]}$. 
We know that $\mathcal{S}_i^{[j]} = \mathcal{S}_{\theta^ji}$, and moreover
$\| r_k \| = \|r_k^{[j]}\|$.
\end{proof}

\begin{coro}
\label{proche_frobenius}
Let $t_{j,k}$ be an integer such that $2t_{j,k} = \theta^jk-k \bmod q-1$.
Then $\|[S_k]^{[j]} - [S_k](t_{j,k})\| \leq 2A$.
\end{coro}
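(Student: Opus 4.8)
The plan is to deduce this directly from Propositions \ref{equivSk} and \ref{frob} together with the two elementary identities $\mathcal{S}_i(j) = \mathcal{S}_{i+2j}$ and $\mathcal{S}_i^{[j]} = \mathcal{S}_{\theta^j i}$ recorded just before Paragraph \ref{norms}, so that both $[S_k]^{[j]}$ and $[S_k](t_{j,k})$ are shown to be approximated by the \emph{same} element $(k+1)\mathcal{S}_{\theta^j(k\bmod q-1)}$ up to an error of norm at most $A$; the triangle inequality then gives the bound $2A$.

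Concretely, I would first invoke Proposition \ref{frob} to write $[S_k]^{[j]} = (k+1)\mathcal{S}_{\theta^j(k\bmod q-1)} + r_k^{[j]}$ with $\|r_k^{[j]}\| \leq A$. Next I would twist the conclusion of Proposition \ref{equivSk} by $\det^{t_{j,k}}$: since twisting by a power of the determinant is an isometry for $\|\cdot\|$ (property (2) of the norm), this yields $[S_k](t_{j,k}) = (k+1)\mathcal{S}_{(k\bmod q-1)}(t_{j,k}) + r_k(t_{j,k})$ with $\|r_k(t_{j,k})\| \leq A$. Then I would identify the two main terms: by $\mathcal{S}_i(j)=\mathcal{S}_{i+2j}$ we have $\mathcal{S}_{(k\bmod q-1)}(t_{j,k}) = \mathcal{S}_{(k\bmod q-1)+2t_{j,k}}$, and the defining congruence $2t_{j,k} \equiv \theta^j k - k \pmod{q-1}$ gives $(k\bmod q-1)+2t_{j,k} \equiv \theta^j k \pmod{q-1}$; since $\theta$ on $\mathbb{Z}/(q-1)\mathbb{Z}$ is multiplication by $p$ and is compatible with reduction mod $q-1$, this residue is exactly $\theta^j(k\bmod q-1)$. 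Hence the two main terms coincide.

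Subtracting, $[S_k]^{[j]} - [S_k](t_{j,k}) = r_k^{[j]} - r_k(t_{j,k})$, and $\|[S_k]^{[j]} - [S_k](t_{j,k})\| \leq \|r_k^{[j]}\| + \|r_k(t_{j,k})\| \leq 2A$, as desired. There is essentially no obstacle here: the only point requiring a moment's care is the index bookkeeping in $\mathbb{Z}/(q-1)\mathbb{Z}$ — checking that $(k\bmod q-1)+2t_{j,k}$ and $\theta^j(k\bmod q-1)$ agree as elements of $\mathbb{Z}/(q-1)\mathbb{Z}$ — which is immediate from the stated compatibility of the $\theta$ operators with reduction modulo $q-1$.
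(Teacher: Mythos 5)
Your argument is correct and is exactly the reasoning the paper compresses into ``this follows from Proposition \ref{frob}'': both $[S_k]^{[j]}$ and $[S_k](t_{j,k})$ are compared to the common main term $(k+1)\mathcal{S}_{\theta^j(k \bmod q-1)}$ via Propositions \ref{frob} and \ref{equivSk}, the identities $\mathcal{S}_i(j)=\mathcal{S}_{i+2j}$ and $\mathcal{S}_i^{[j]}=\mathcal{S}_{\theta^j i}$, and the triangle inequality. The only point the paper's proof actually dwells on, and which you take for granted from the phrasing of the statement, is that such an integer $t_{j,k}$ exists at all (when $p>2$ one must check that $\theta^jk-k \equiv (p^j-1)k$ is even modulo the even number $q-1$); it would be worth adding that one line.
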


\begin{proof}
This follows from Proposition \ref{frob} as soon as we have proved that
such a $t_{j,k}$ exists. When $p=2$, multiplication by $2$ is an
automorphism of $\mathbb{Z}/(q-1)\mathbb{Z}$, hence $t_{j,k}$ exists (and is unique).
When $p>2$, $\theta^jk$ has the same parity as $k$, as $\theta$ is the
same as multiplication by $p$ modulo $q-1$, so $\theta^jk-k$ is even
which gives the existence of $t_{j,k}$. 
\end{proof}

\subsection{Product}

\begin{prop}
\label{tensoriel_s}
Let $a,b \in \mathbb{Z}/(q-1)\mathbb{Z}$. Then $\mathcal{S}_a  \mathcal{S}_b = \mathcal{S}_{a+b}$.
\end{prop}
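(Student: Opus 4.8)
The plan is to work with Brauer characters, i.e.\ traces on $p$-regular (equivalently, semisimple) conjugacy classes. Over the splitting field $E$ the Brauer characters of the $q(q-1)$ irreducible modules $L_n(m)$ are linearly independent functions on the set of $p$-regular classes of $G$, and that set also has cardinality $q(q-1)$; since the $[L_n(m)]$ form the basis $L$ of $R$, the Brauer character map is therefore an injective $\mathbb{R}$-linear map from $R=\operatorname{Rep}_E(G)\otimes_{\mathbb{Z}}\mathbb{R}$ to the space of functions on $p$-regular classes, and it is a ring homomorphism, sending a product $[V][W]$ to the pointwise product of Brauer characters. It thus suffices to check that $\mathcal{S}_a\mathcal{S}_b$ and $\mathcal{S}_{a+b}$ have the same Brauer character, and for that it is enough to determine the Brauer character of each $\mathcal{S}_i$.

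First I would compute the Brauer character of $[\mathcal{V}(\chi)]=[\operatorname{ind}_B^G\chi]$. The semisimple classes of $\operatorname{GL}_2(\mathbb{F}_q)$ fall into three families: the central elements $cI$ with $c\in\mathbb{F}_q^*$; the split regular elements, conjugate to $\operatorname{diag}(a,d)$ with $a\neq d$; and the non-split ones, with eigenvalues in $\mathbb{F}_{q^2}\setminus\mathbb{F}_q$. Since conjugation preserves $p$-regularity and the $1$-dimensional $\chi$ (being trivial on the unipotent radical of $B$) is its own Brauer character, the usual induced-character formula applies: the value of the Brauer character of $\mathcal{V}(\chi)$ at $cI$ is $(q+1)\chi(cI)$; at $\operatorname{diag}(a,d)$ with $a\neq d$ it is $\chi(a,d)+\chi(d,a)$, accounting for the two $B$-conjugates of the element, where $\chi(a,d)$ denotes the value of $\chi$ on $\operatorname{diag}(a,d)$; and at a non-split element it is $0$, since no $G$-conjugate lies in the split subgroup $B$.

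Next I would sum over the $q-1$ characters $\chi$ of $B$ with $\chi_{|Z}=i$. Writing such a $\chi$ on the diagonal torus as $\operatorname{diag}(x,y)\mapsto\lambda(x)\mu(y)$ with $\lambda,\mu$ characters of $\mathbb{F}_q^*$ and $\lambda(t)\mu(t)=t^i$, so $\mu(t)=t^i\lambda(t)^{-1}$, one gets $\chi(a,d)=\lambda(a/d)\,d^i$; hence $\sum_{\chi_{|Z}=i}\chi(a,d)=d^i\sum_{\lambda}\lambda(a/d)=0$ whenever $a\neq d$, by orthogonality of the characters of the cyclic group $\mathbb{F}_q^*$, and likewise for $\chi(d,a)$, while at $cI$ the sum is $(q-1)(q+1)c^i$. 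Dividing by $q^2-1$, the Brauer character of $\mathcal{S}_i$ equals $c^i$ at $cI$ and $0$ at every non-central semisimple class (in particular $\dim\mathcal{S}_i=1$ and $\alpha(\mathcal{S}_i)=i$, consistently with the text). Consequently the Brauer character of $\mathcal{S}_a\mathcal{S}_b$, being the pointwise product, equals $c^{a+b}$ at $cI$ and $0$ elsewhere, which is exactly the Brauer character of $\mathcal{S}_{a+b}$; injectivity then yields $\mathcal{S}_a\mathcal{S}_b=\mathcal{S}_{a+b}$.

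I expect the only genuinely delicate point to be the bookkeeping in the last two steps — correctly counting the two $B$-conjugates in the split case and applying the orthogonality relation to the right cyclic group; everything else is formal. One could instead bypass Brauer characters by starting from the Mackey decomposition for $B\backslash G/B$, which gives $[\mathcal{V}(\chi)][\mathcal{V}(\chi')]=[\mathcal{V}(\chi\chi')]+[\operatorname{ind}_T^G(\chi\cdot(\chi')^w)]$ for the diagonal torus $T$ and the nontrivial Weyl element $w$, and then summing over $\chi,\chi'$ with prescribed restrictions to $Z$; but the character computation above is shorter.
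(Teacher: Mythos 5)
Your proof is correct, but it follows a genuinely different route from the one in the paper. The paper stays entirely inside the asymptotic framework it has already built: it computes $[S_u][S_v]$ for $u\equiv a$, $v\equiv b \pmod{q-1}$ in two ways --- once by multiplying the two approximations $[S_k]=(k+1)\mathcal{S}_{(k\bmod q-1)}+O(1)$ of Proposition \ref{equivSk}, and once by expanding $[S_u][S_v]=\sum_{i=0}^{u}[S_{u+v-2i}](i)$ via the Glover recurrence of Proposition \ref{produittens} and applying Proposition \ref{equivSk} term by term --- and then lets $u,v\to\infty$ to identify the leading coefficients. Your argument instead is a finite, closed-form computation: you embed $R$ into class functions on $p$-regular classes via Brauer characters (a ring homomorphism, injective since the irreducible Brauer characters are linearly independent), evaluate the induced character of $\mathcal{V}(\chi)$ on the three families of semisimple classes, and use orthogonality of characters of $\mathbb{F}_q^*$ to show that the Brauer character of $\mathcal{S}_i$ is $c^i$ on central elements $cI$ and $0$ elsewhere; the multiplicativity $\mathcal{S}_a\mathcal{S}_b=\mathcal{S}_{a+b}$ is then immediate. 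All the individual steps (the count of $p$-regular classes, the principal series character values, the orthogonality computation) check out. What your approach buys is a limit-free proof and a conceptually cleaner characterization of $\mathcal{S}_i$ as the unique element of $R$ whose Brauer character is the character $c\mapsto c^i$ concentrated on the center; what the paper's approach buys is that it reuses exactly the estimates (Propositions \ref{produittens} and \ref{equivSk}) that are needed anyway for Theorem \ref{asymptote}, so no additional machinery (Brauer character theory) has to be invoked. Your sketched Mackey-decomposition alternative would also work, but note that to conclude from it you would still need the extra characteristic-$p$ observation that $[\operatorname{ind}_T^B\eta]=q[\tilde\eta]$ in the Grothendieck group of $B$ (the unipotent radical acts trivially on any semisimplification), so that $[\operatorname{ind}_T^G\eta]=q[\mathcal{V}(\tilde\eta)]$; as written that step is only gestured at.
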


\begin{proof}
Let $u,v \to +\infty$  with $u = a \pmod {q-1}$,
$v = b \pmod {q-1}$ and $u \leq v$. 

We have the following equality:
\begin{eqnarray*}
[S_u][S_v] &=& ((u+1)\mathcal{S}_{(u \bmod q-1)} + O(1))((v+1)\mathcal{S}_{(v \bmod q-1)}+ O(1)) \\
	&=& (u+1)(v+1)\mathcal{S}_{(u \bmod q-1)}\mathcal{S}_{(v \bmod q-1)}+ O(\max(u,v))
\end{eqnarray*}

Moreover:
$$
[S_u][S_v] = (u+1)(v+1)\mathcal{S}_{(u+v \bmod q-1)} + O(\min(u,v))
$$
Indeed it follows from \ref{produittens} that
$[S_u][S_v] = [S_{u+v}] + [S_{u-1}][S_{v-1}](1)$, so
$[S_u][S_v] = \sum_{i=0}^u[S_{u+v-2i}](i)$.
Hence:
\begin{eqnarray*}
[S_u][S_v] &=& \sum_{i=0}^u((u+v-2i+1)\mathcal{S}_{(u+v \bmod q-1)} + r_{u+v-2i}) \\
		&=& (u+1)(v+1)\mathcal{S}_{(u+v \bmod q-1)} + \sum_{i=0}^ur_{u+v-2i}
\end{eqnarray*}
Finally $\|[S_u][S_v] - (u+1)(v+1)\mathcal{S}_{(u+v \bmod q-1)}\| \leq (\min(u,v)+1)A$.

The result then follows from the comparison of the two formulas for
$[S_u][S_v]$.
\end{proof}

\subsection{Proof of the theorem}
\label{preuve}

We can now prove Theorem \ref{asymptote} and its corollary.

\begin{proof}[Proof of Theorem \ref{asymptote}]
Using linearity and the invariance of the norm with respect to twisting
by the determinant, we need only prove it when $W = S_m$ and 
$0 \leq m <q$.

Suppose first that $r=1$. We write $k,j$ for $k_1,j_1$.
Let $V = W \otimes S_k^{[j]}$ as in the statement of the theorem.

Let $t$ be an integer as in corollary \ref{proche_frobenius}, so that
$[S_k^{[j]}] = [S_{k}](t) + a$ for some $a$ with $\|a \| \leq 2A$.
Then $[W \otimes S_k^{[j]}] = [S_m][S_{k}](t) + [W]a$.
By the computations in the proof of Proposition
\ref{tensoriel_s} we see that
$$
[S_m][S_k] = (m+1)(k+1)\mathcal{S}_{(m+k \bmod q-1)} + b
$$
for
some $b\in R$ with $\|b\| \leq qA$. Finally 
$$
[W \otimes S_k^{[j]}] 
= (\dim (W\otimes S_k^{[j]}))\mathcal{S}_{\alpha(W\otimes S_k^{[j]})} + c
$$
where
$c = b(t) + [W]a$, so that $\|c\| \leq qA+2A^2$. We get the result
with $C = qA+2A^2$.

Let us now return to the general case.
Let $U = S_{k_1}^{[j_1]} \otimes \dots \otimes S_{k_r}^{[j_r]}$
and $V = W \otimes U$.

We set $V_1 = W \otimes S_{k_1}^{[j_1]}$ and $V_i = S_{k_i}^{[j_i]}$
for $i \geq 2$.
Then $[V] = \prod_{i=1}^r[V_i]$.

For all $i \geq 2$ we have $[V_i] = (\dim V_i)\mathcal{S}_{\alpha(V_i)} + c_i$ with
$\|c_i \| \leq A$ by Proposition \ref{frob}. 
For $i=1$ we have $[V_1] = (\dim V_1)\mathcal{S}_{\alpha(V_1)} + c_1$ with $\|c_1\| \leq
C_1 = A(q+2A)$ by the case $r=1$ of the theorem.
Moreover $\dim V_1 \leq q(k_1+1)$, as $\dim W \leq q$.

Let $r_V = [V] - (\dim V)\mathcal{S}_{\alpha(V)}$.
We have:
$$
[V] = \prod_{i}((\dim V_i)\mathcal{S}_{\alpha(V_i)}+c_i)
= \sum_{I \subset \{1,\dots,r\}}(\prod_{i\in I}(\dim
V_i)\mathcal{S}_{\alpha(V_i)})(\prod_{i\not\in I}c_i).
$$
Using Proposition \ref{tensoriel_s} 
we see that $(\dim V)\mathcal{S}_{\alpha(V)} = \prod_i(\dim V_i)\mathcal{S}_{\alpha(V_i)}$,
hence:
$$
r_V = \sum_{I \subset \{1,\dots,r\}, I \neq \{1,\dots,r\}}
(\prod_{i\in I}(\dim V_i)\mathcal{S}_{\alpha(V_i)})(\prod_{i\not\in I}c_i).
$$
We have:
$$
\|(\prod_{i\in I}\mathcal{S}_{\alpha(V_i)})(\prod_{i\not\in I}c_i)\| \leq
\prod_{i\in I}\|\mathcal{S}_{\alpha(V_i)}\|\prod_{i\not\in I}\|c_i\| \leq
A^{r}(2A+q)
$$ 
as for each $i$ we have 
$\|\mathcal{S}_{\alpha(V_i)}\| \leq A$ by definition of $A$, and $\|c_i\|\leq A$
except for $\|c_1\| \leq A(2A +q)$.
Finally:
$$
\|r_V\| \leq A^r(2A+q) \sum_{I \subset \{1,\dots,r\}, I \neq
\{1,\dots,r\}} \prod_{i\in I}\dim V_i.
$$
Moreover 
$\prod_{i\in I}\dim V_i \leq q(\dim U)/\min_{1\leq i \leq r}(k_i+1)$.
Hence:
$$
\|r_V\| \leq q2^rA^r(2A+q)(\dim U)/\min_i(k_i+1).
$$
The result follows with $C_r = q2^rA^r(2A+q)$.
\end{proof}

\begin{proof}[Proof of Corollary \ref{coro_asymptote}]

The irreducible representations of
$\operatorname{Res}_{K/\mathbb{Q}_p}\operatorname{GL}_2$
are of the form
$\otimes_{\tau\in \operatorname{Hom}(K,\overline{\mathbb{Q}}_p)}
(\operatorname{Symm}^{k_{\tau}}W_{\tau}\otimes \det_{\tau}^{j_{\tau}})$
for some $k_{\tau} \in \mathbb{Z}_{\geq 0}$
and $j_{\tau} \in \mathbb{Z}$. 
Let $U$ be the reduction in characteristic $p$ of such a
representation. Then $[U]$ is of the form
$[U] = [S_{k_1}^{[j_1]}\otimes\dots \otimes S_{k_h}^{[j_h]}(n)]$ 
where $h = [K : \mathbb{Q}_p]$. 

Let $I$ be a nonempty subset of $\{1,\dots,h\}$ and let $r$ be the
cardinality of $I$.
We apply Theorem
\ref{asymptote}
to the decomposition
$V = W'\otimes U'$
with
$W' = W(n)\otimes (\otimes_{i\not\in I}S_{k_i}^{[j_i]})$ and
$U' = \otimes_{i\in I}S_{k_i}^{[j_i]}$.
It gives us $[V] = (\dim V)\mathcal{S}_{\alpha(V)} + r_V$ with
$\|r_V\| \leq C_r|W'|_{S,1}(\dim U')/(\min_{i\in I}k_i+1)$. 

We have $|W'|_{S,1} \leq M\|W'\|$ with $M$ as in Paragraph \ref{norms}.
As the norm $\|\cdot\|$ is an algebra norm, we get
$\|W' \| \leq \|W(n)\| \prod_{i\not\in I}\|S_{k_i}^{[j_i]}\|$.
Moreover $\|W(n)\| = \|W\|$  by
the properties of $\|\cdot\|$.

We apply Proposition \ref{frob} to each representation $S_{k_i}^{[j_i]}$ and we get an
inequality $\|S_{k_i}^{[j_i]}\| 
\leq (k_i+1)\|\mathcal{S}_{(\theta^{j_i}k_i \bmod {q-1})}\| + A$,
so that
$\|S_{k_i}^{[j_i]}\| \leq 2A(k_i+1)$ and
$|W'|_{S,1} \leq M\|W\|\prod_{i\not\in I}(k_i+1)$.

From $\dim U = 
(\dim U')\prod_{i\not\in I}(k_i+1)$ we get:
$$
\|r_V\| \leq MC_r(2A)^{h-r}\|W\|(\dim U)/(\min_{i\in I}k_i+1).
$$
Using the value of $C_r$ that we computed in the proof of
Theorem \ref{asymptote} we see that:
$$
\|r_V\| \leq Mq(2A+q)(2A)^{h}\|W\|(\dim U)/(\min_{i\in I}k_i+1).
$$
The best choice for $I$ is to take $I = \{i_0\}$ where $i_0$
is such that $k_{i_0} = \max k_i$, so that:
$$
\|r_V\| \leq Mq(2A+q)(2A)^{h}\|W\|(\dim U)/(\max_{1\leq i \leq h}k_i+1)
$$
In order have a formula that works for all $U$ independently of
the values of the $k_i$'s, the best inequality we
can use is $\max_ik_i+1 \geq (\dim U)^{1/h}$, which gives
the corollary with $C = Mq(2A+q)(2A)^{h} = MC_h$. 
\end{proof}

\subsection{Asymptotic values of multiplicities}

Let $a_{n,m}(V) = [V : L_n(m)]$. 
Our goal is to find the asymptotic value of $a_{n,m}(V)$.

We observe first:
\begin{lemm}
If $V$ has a central character $\alpha(V)$, 
then $a_{n,m}(V) = 0$ if $n + 2m \neq \alpha(V) \pmod {q-1}$.
\end{lemm}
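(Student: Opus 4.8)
The plan is to show that the central character of $L_n(m)$ equals $n + 2m \pmod{q-1}$, from which the statement follows immediately: in the decomposition $[V] = \sum_{n,m} a_{n,m}(V)[L_n(m)]$, if $V$ has central character $\alpha(V)$, then only those $L_n(m)$ with the same central character can occur, forcing $a_{n,m}(V) = 0$ whenever $n + 2m \not\equiv \alpha(V) \pmod{q-1}$.

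To compute the central character of $L_n(m)$, I would first recall that $L_n$ is the subrepresentation of $S_n = E[x,y]_n$ generated by $x^n$, spanned by certain monomials $x^a y^{n-a}$ of total degree $n$. A scalar matrix $\begin{pmatrix} t & 0 \\ 0 & t \end{pmatrix}$ acts on a monomial $x^a y^b$ (with $a+b=n$) via $P(x,y) \mapsto P(tx, ty) = t^n P(x,y)$, so $Z$ acts on all of $S_n$ — and hence on $L_n$ — by the character $t \mapsto t^n$, which under the identification of Paragraph~3.1 is the class $n \pmod{q-1}$. Twisting by $\det^m$ multiplies this by the central character of $\det^m$, which is $t \mapsto (t^2)^m = t^{2m}$, i.e. the class $2m$. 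Hence $\alpha(L_n(m)) = n + 2m \pmod{q-1}$.

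Then I would conclude: write $V = \sum_{n,m} a_{n,m}(V) [L_n(m)]$ in the basis $L$. Since $V$ has a central character, $Z$ acts on $V$ by the scalar character $\alpha(V)$; decomposing $V$ into $Z$-isotypic pieces shows that every irreducible constituent of $V$ must have central character $\alpha(V)$. Therefore $a_{n,m}(V) \neq 0$ implies $\alpha(L_n(m)) = \alpha(V)$, i.e. $n + 2m \equiv \alpha(V) \pmod{q-1}$; contrapositively, $a_{n,m}(V) = 0$ whenever $n + 2m \not\equiv \alpha(V) \pmod{q-1}$.

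There is no real obstacle here — the only point requiring a moment's care is the bookkeeping of the identification of characters of $Z$ with $\mathbb{Z}/(q-1)\mathbb{Z}$ fixed in Paragraph~3.1 (so that $\det^m$ contributes $2m$ and not $m$), and the observation that a representation with a central character has all its Jordan–Hölder constituents sharing that same central character, which is immediate since $Z$ is central and acts by a scalar on a representation with a central character.
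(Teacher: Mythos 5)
Your proof is correct, and it supplies exactly the standard argument that the paper leaves implicit (the lemma is stated as an observation with no proof given): the central character of $L_n(m)$ is $n+2m \pmod{q-1}$ since scalars act on homogeneous degree-$n$ polynomials by $t\mapsto t^n$ and $\det^m$ contributes $t^{2m}$, and every Jordan--Hölder constituent of a representation with a central character shares that character. Nothing is missing.
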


Moreover Proposition \ref{calcul_omegan} implies:
\begin{prop}
\label{s_omegan}
\begin{enumerate}
\item
$[\mathcal{S}_i : L_n(m)] = 0$ if $n+2m \neq i \pmod {q-1}$
\item
$[\mathcal{S}_i : L_n(m)] = \omega(n)/(q^2-1)$ if $n+2m = i \pmod {q-1}$.
\end{enumerate}
\end{prop}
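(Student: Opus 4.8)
The plan is to deduce both assertions directly from Proposition \ref{calcul_omegan} together with the definition of $\mathcal{S}_i$ as an average of principal series, using Diamond's description (Proposition \ref{Diamond}) to count constituents. First I would recall that $\mathcal{S}_i = \frac{1}{q^2-1}\sum_{\chi : B \to \mathbb{F}_q^*,\ \chi_{|Z} = i}[\mathcal{V}(\chi)]$, and that every character of $B$ with values in $\mathbb{F}_q^*$ is of the form $\lambda_r \otimes \det^j$, hence $\mathcal{V}(\chi) \cong \mathcal{V}_r(j)$ for suitable $r \in \mathbb{Z}/(q-1)\mathbb{Z}$ and $j \in \mathbb{Z}/(q-1)\mathbb{Z}$. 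The restriction to $Z$ of $\lambda_r \otimes \det^j$ is $r + 2j \bmod q-1$, so the pairs $(r,j)$ contributing to $\mathcal{S}_i$ are exactly those with $r + 2j \equiv i \pmod{q-1}$; there are $q-1$ such pairs (one value of $r$ for each $j$, or of $j$ for each $r$ when $p$ is odd; when $p=2$ use that doubling is bijective — the count is $q-1$ in all cases). Thus
$$
(q^2-1)\,\mathcal{S}_i = \sum_{\substack{(r,j)\in(\mathbb{Z}/(q-1)\mathbb{Z})^2 \\ r+2j\equiv i}} [\mathcal{V}_r(j)].
$$

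For part (1): every constituent of $\mathcal{V}_r(j)$ has the form $L_{\lambda^c(r)}(j + \ell_c(r))$ by Proposition \ref{Diamond}, and one checks from the construction of $\lambda^c$ and $\ell_c$ that $\lambda^c(r) + 2(j+\ell_c(r)) \equiv r + 2j \pmod{q-1}$ — this is the statement that the principal series $\mathcal{V}_r(j)$ has central character $r+2j$, which also follows abstractly since $\mathcal{V}(\chi)$ has central character $\chi_{|Z}$. Hence every $L_n(m)$ occurring in $(q^2-1)\mathcal{S}_i$ satisfies $n+2m\equiv i$, giving (1). (Alternatively, (1) is immediate from the Lemma preceding the Proposition applied to $\mathcal{S}_i$, which has central character $i$.)

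For part (2): fix $(n,m)$ with $n+2m\equiv i \pmod{q-1}$; I need to show $L_n(m)$ occurs with total multiplicity $\omega(n)$ among the $[\mathcal{V}_r(j)]$ with $r+2j\equiv i$. By definition $A(n,m')$ parametrizes the pairs $(r,j)$ (with $r\in\{0,\dots,q-2\}$, $j\in\mathbb{Z}/(q-1)\mathbb{Z}$) for which $L_n(m)$ is a constituent of $\mathcal{V}_r(j)$, namely $(r,j) = (\mu^c(n),\, m - \ell_c(\mu^c(n)))$ for $c\in\mathcal{A}_f$ compatible with $n$ and with $\mu^c(n)\neq q-1$; by Proposition \ref{Diamond} each such constituent has multiplicity exactly $1$, and distinct $c$ give distinct pairs. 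So the total multiplicity of $L_n(m)$ in $\sum_{(r,j)}[\mathcal{V}_r(j)]$ over \emph{all} pairs is $|A(n,m)| = \omega(n)$. It remains to observe that every such pair automatically satisfies the constraint $r+2j\equiv i$: indeed if $L_n(m)$ is a constituent of $\mathcal{V}_r(j)$ then as noted $r+2j$ equals the central character of $L_n(m)$, which is $n+2m\equiv i$. Therefore all $\omega(n)$ occurrences lie in our sum, and dividing by $q^2-1$ gives $[\mathcal{S}_i : L_n(m)] = \omega(n)/(q^2-1)$.

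The only mildly delicate point — the place I would be most careful — is the bookkeeping between the indexing set $A(n,m)$ (defined with $r$ ranging over $\{0,\dots,q-2\}$ and the condition $\mu^c(n)\neq q-1$) and the sum defining $\mathcal{S}_i$ (where $r$ ranges over $\mathbb{Z}/(q-1)\mathbb{Z}$, i.e. including the class of $q-1 \equiv 0$): one must check that excluding $\mu^c(n) = q-1$ in Proposition \ref{antecedents} does not discard a genuine constituent, i.e. that $\mathcal{V}_{q-1}(j)$ contributes nothing new that is not already counted with $r=0$ — this is exactly the $(q-1)$-periodicity of $[\mathcal{V}_r]$ in $r$ together with the normalization already built into the definition of $A(n,m)$, so no constituent is lost and none is double-counted.
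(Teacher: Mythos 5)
Your proof is correct and follows exactly the route the paper intends (the paper leaves it implicit, asserting the proposition as a consequence of the preceding results): decompose $(q^2-1)\mathcal{S}_i$ as the sum of the $[\mathcal{V}_r(j)]$ with $r+2j\equiv i \pmod{q-1}$, use the central character of $L_n(m)$ for part (1), and count occurrences via $|A(n,m)|=\omega(n)$ together with Diamond's multiplicity-one statement for part (2). Your closing remark on the $\mu^c(n)=q-1$ normalization is exactly the right point to flag, and your resolution of it is sound.
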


Finally we get
(the value of $C_r$ is given in Paragraph \ref{preuve} and the value of
$\omega(n)$ in Proposition \ref{calcul_omegan}):
\begin{theo}
Let $r \geq 1$ be an integer.
Let $W$ be a representation of $G$ with a central character.
Let $U = S_{k_1}^{[j_1]} \otimes \dots \otimes S_{k_r}^{[j_r]}$
and $V = W \otimes U$.
Then
$a_{n,m}(V) = 0$ if $n + 2m \neq \alpha(V) \pmod {q-1}$
and
$$
|a_{n,m}(V) - \omega(n)(\dim V)/(q^2-1)| \leq C_r|W|_{S,1}(\dim U)/\min_i(k_i+1)
$$
if $n+2m = \alpha(V) \pmod {q-1}$.
\end{theo}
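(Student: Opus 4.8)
The plan is to deduce the statement directly from Theorem \ref{asymptote} by extracting, in the basis $L$, the coefficient of $[L_n(m)]$ and controlling it with the fourth defining property of the norm $\|\cdot\|$.

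First I would dispose of the case $n + 2m \neq \alpha(V) \pmod{q-1}$. Since $W$ has a central character and $U = S_{k_1}^{[j_1]} \otimes \dots \otimes S_{k_r}^{[j_r]}$ has one as well (each $S_{k_i}^{[j_i]}$ has central character $\theta^{j_i}k_i$), the tensor product $V = W \otimes U$ has a central character $\alpha(V)$. The lemma preceding Proposition \ref{s_omegan} then gives $a_{n,m}(V) = [V : L_n(m)] = 0$, which is the first assertion.

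Now suppose $n + 2m = \alpha(V) \pmod{q-1}$. Theorem \ref{asymptote} furnishes a decomposition $[V] = (\dim V)\mathcal{S}_{\alpha(V)} + r_V$ in $R$ with $\|r_V\| \leq C_r|W|_{S,1}(\dim U)/\min_i(k_i+1)$. Writing both sides in the basis $L$ and taking the $[L_n(m)]$-coefficient yields
$$
a_{n,m}(V) = (\dim V)\,[\mathcal{S}_{\alpha(V)} : L_n(m)] + [r_V : L_n(m)].
$$
By Proposition \ref{s_omegan}(2), the hypothesis $n + 2m = \alpha(V) \pmod{q-1}$ gives $[\mathcal{S}_{\alpha(V)} : L_n(m)] = \omega(n)/(q^2-1)$, so $a_{n,m}(V) - \omega(n)(\dim V)/(q^2-1) = [r_V : L_n(m)]$. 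Finally, property (4) of the norm $\|\cdot\|$ from Paragraph \ref{norms} gives $|[r_V : L_n(m)]| \leq \|r_V\|$, and combining with the bound on $\|r_V\|$ completes the proof.

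There is no real obstacle here beyond Theorem \ref{asymptote} itself: the content of the argument is entirely contained in that theorem and in Proposition \ref{s_omegan}. The only point worth stressing is that the norm $\|\cdot\|$ was constructed in Paragraph \ref{norms} precisely so that an estimate on $\|r_V\|$ in $R$ translates, via property (4), into a coefficient-wise estimate on each multiplicity $[r_V : L_n(m)]$; this is what makes the passage from Theorem \ref{asymptote} to the present statement immediate.
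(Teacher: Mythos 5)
Your proposal is correct and follows exactly the paper's own argument: apply Theorem \ref{asymptote}, extract the $[L_n(m)]$-coefficient, evaluate $[\mathcal{S}_{\alpha(V)}:L_n(m)]$ via Proposition \ref{s_omegan}, and bound the error term using property (4) of the norm from Paragraph \ref{norms}. Nothing to add.
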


\begin{proof}
Write $[V] = (\dim V)\mathcal{S}_{\alpha(V)} + r_V$
as in Theorem \ref{asymptote}. 
Then $$
|[V:L_n(m)] - (\dim V)[\mathcal{S}_{\alpha(V)}:L_n(m)]| \leq |[r_V:L_n(m)]|.
$$
By the conditions we imposed on the norm $\|\cdot\|$ in Paragraph
\ref{norms}, we have $|[r_V:L_n(m)]| \leq \|r_V\|$. The result follows.
\end{proof}

\begin{coro}
Let $W$ be a representation of $G$ with a central character, let $U$ be
a representation of $G$ that is the reduction in characteristic $p$
of an algebraic irreducible representation in characteristic $0$ of
$\operatorname{Res}_{K/\mathbb{Q}_p}\operatorname{GL}_2$ and let $V = W \otimes
U$. Then 
$$
|a_{n,m}(V) - \omega(n)(\dim V)/(q^2-1)| \leq C\|W\|(\dim
U)^{1-1/h}
$$
if $n+2m = \alpha(V) \pmod {q-1}$, and $a_{n,m}(V) = 0$ otherwise.
\end{coro}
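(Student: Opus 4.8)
The plan is to deduce this corollary from Corollary \ref{coro_asymptote} together with the explicit multiplicities $[\mathcal{S}_{\alpha}:L_n(m)]$ recorded in Proposition \ref{s_omegan}, in exactly the same way the preceding theorem was deduced from Theorem \ref{asymptote}. First I would observe that $V = W\otimes U$ has a central character: $W$ has one by hypothesis, and $U$, being the reduction of an irreducible algebraic representation of $\operatorname{Res}_{K/\mathbb{Q}_p}\operatorname{GL}_2$, is of the form $[S_{k_1}^{[j_1]}\otimes\cdots\otimes S_{k_h}^{[j_h]}(n)]$ and hence also has one; so $\alpha(V)$ is defined and equals the product of the central characters of $W$ and $U$. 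In particular the Lemma on central characters and Corollary \ref{coro_asymptote} both apply to $V$.

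Next I would apply Corollary \ref{coro_asymptote} to write $[V] = (\dim V)\mathcal{S}_{\alpha(V)} + r_V$ with $\|r_V\| \leq C\|W\|(\dim U)^{1-1/h}$, where $C$ is the constant depending only on $K$ furnished there. Extracting the coefficient of $[L_n(m)]$ in the basis $L$ gives
$$
a_{n,m}(V) = (\dim V)\,[\mathcal{S}_{\alpha(V)} : L_n(m)] + [r_V : L_n(m)],
$$
and by property (4) of the norm $\|\cdot\|$ fixed in Paragraph \ref{norms} we have $|[r_V:L_n(m)]| \leq \|r_V\| \leq C\|W\|(\dim U)^{1-1/h}$. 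It then remains only to evaluate $(\dim V)[\mathcal{S}_{\alpha(V)}:L_n(m)]$ using Proposition \ref{s_omegan}. If $n+2m \neq \alpha(V) \pmod{q-1}$, part (1) of that proposition makes this term vanish, and the Lemma on central characters gives $a_{n,m}(V) = 0$ directly, so the statement holds trivially in that case. If $n+2m = \alpha(V) \pmod{q-1}$, part (2) gives $[\mathcal{S}_{\alpha(V)}:L_n(m)] = \omega(n)/(q^2-1)$, whence $a_{n,m}(V) - \omega(n)(\dim V)/(q^2-1) = [r_V:L_n(m)]$, which is bounded in absolute value by $C\|W\|(\dim U)^{1-1/h}$ as above.

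Every ingredient — Corollary \ref{coro_asymptote}, Proposition \ref{s_omegan}, and the norm inequality $|[r_V:L_n(m)]| \le \|r_V\|$ — has already been established, so there is no real obstacle; the proof is a short assembly. The only point meriting a word of care is the verification that $V$ has a central character equal to $\alpha(W)\alpha(U)$, which is what allows one to invoke Corollary \ref{coro_asymptote} and the central-character lemma simultaneously.
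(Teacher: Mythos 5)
Your proof is correct and follows exactly the route the paper intends: the corollary is the immediate analogue of the preceding theorem, obtained by substituting the bound of Corollary \ref{coro_asymptote} for that of Theorem \ref{asymptote} and then extracting the coefficient of $[L_n(m)]$ via Proposition \ref{s_omegan} and property (4) of the norm. No issues.
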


\section{Application}
\label{section_application}

\subsection{The Breuil-Mézard conjecture}

Let us consider again the situation described in Paragraph
\ref{motivation} of the introduction. Let $K/\mathbb{Q}_p$ be a finite
extension and let $h = [K:\mathbb{Q}_p]$. We denote by $\kappa$ the residue
field of $K$ so that $\kappa = \mathbb{F}_q$ for some $q = p^f$.
Let $k$ be a finite field and let
$\overline{\rho} : \operatorname{Gal}(\overline{K}/K) \to \operatorname{GL}_2(k)$ be a Galois representation
such that $\operatorname{End}_{k[\operatorname{Gal}(\overline{K}/K)]}(\overline{\rho}) = k$.

Fix a Galois type $\mathtt{t}$, so that $\sigma(\mathtt{t})$ is an irreducible
finite-dimensional representation of $\operatorname{GL}_2(\mathcal{O}_K)$  
with coefficients in
$\overline{\mathbb{Q}}_p$. Fix also
$v = (n_{\tau},m_{\tau})_{\tau \in H_K}$
with $n_{\tau} \in \mathbb{Z}_{\geq 0}$ and 
$m_{\tau} \in \mathbb{Z}$. We denote by $\sigma(v)$ 
the representation
$\otimes_{\tau} (\operatorname{Symm}^{n_{\tau}}W_{\tau} \otimes \det_{\tau}^{m_{\tau}})$.

Let $\sigma(v,\mathtt{t}) = \sigma(\mathtt{t}) \otimes \sigma(v)$ 
and let 
$\overline{\sigma(v,\mathtt{t})}^{ss}$ be the semisimplification of 
the reduction to characteristic $p$ of 
$\sigma(v,\mathtt{t})$. It is a finite-dimensional
$k$-vector space with an action of $\operatorname{GL}_2(\mathcal{O}_K)$ that factors through
$\operatorname{GL}_2(\kappa)$. 
Note that we can 
write
$\overline{\sigma(v,\mathtt{t})}^{ss}$ as 
$(\overline{\sigma(\mathtt{t})}\otimes \otimes_{\tau} 
(\operatorname{Symm}^{n_{\tau}}E^2 \otimes \det{}_{\tau}^{m_{\tau}}))^{ss}$.

In order to compute the automorphic multiplicity,
which is defined by
$\mu_{Aut}(\overline{\rho},v,t) =
\sum_{\sigma}\mu_{\sigma}(\overline{\rho})a_{\sigma}(v,\mathtt{t})$,
we have to find the multiplicity
$a_{\sigma}(v,\mathtt{t})$ of $\sigma$ in the semi-simplification of
$\overline{\sigma(v,\mathtt{t})}$ for each irreducible representation
$\sigma$ of $\operatorname{GL}_2(\kappa)$.

For a fixed Galois type $\mathtt{t}$ and $v$ going to infinity (that is, with
the dimension of $\sigma(v)$ going to infinity) we can use the results of
Section \ref{section_asymptote} in order to have an asymptotic estimate 
of the values of the $a_{\sigma}(v,\mathtt{t})$ and therefore of
$\mu_{Aut}(\overline{\rho},v,\mathtt{t}) =
\Sigma_{\sigma}\mu_{\sigma}(\overline{\rho})a_{\sigma}(v,\mathtt{t})$, 
provided we know the intrinsic multiplicities
$\mu_{\sigma}(\overline{\rho})$. 

It is conjectured that 
$\mu_{\sigma}(\overline{\rho})$ can be nonzero only if $\sigma$
is an element of the set $D(\overline{\rho})$ 
of Serre weights defined in the article
\cite{BDJ} in the case where $K$ is unramified, and in 
\cite{Sch} and \cite{BLGG} in the general case. 
In particular, as all elements of $D(\overline{\rho})$
have the same central character, $\mu_{Aut}(\overline{\rho},v,\mathtt{t})$
is expected to be nonzero for only one possible value of the
central character $\alpha(\overline{\sigma(v,\mathtt{t})})$.

\subsection{Example: the case $K = \mathbb{Q}_p$}

In the case $K = \mathbb{Q}_p$ the Breuil-Mézard conjecture is entirely known thanks
to results of Kisin (\cite{Kisin1}) and Pa{\v{s}}k{\=u}nas (\cite{Pas}).
We will treat as an example the case where
$\overline{\rho} : G_{\mathbb{Q}_p} \to \operatorname{GL}_2(k)$ is an absolutely irreducible
representation. All the results we will use in this example can be found in
the article \cite{BrMez1}.

In this case,
$v = (a,b) \in(\mathbb{Z}_{\geq 0} \times \mathbb{Z})$ and
$\sigma(v) = \operatorname{Symm}^a \overline{\mathbb{Q}}_p^2\otimes \det^b$. 
The irreducible representations of $\operatorname{GL}_2(\mathbb{F}_p)$
are the $\sigma_{n,m} = \operatorname{Symm}^n E^2\otimes \det^m$
for $0 \leq n \leq p-1$ and $m \in \mathbb{Z}/(p-1)\mathbb{Z}$, and
$\sigma_{n,m}$ is the representation denoted by
$L_n(m)$ in Section \ref{section_asymptote}.
We denote $a_{\sigma_{n,m}}(v,\mathtt{t})$ by $a_{n,m}(v,\mathtt{t})$.
In this example we take for $\mathtt{t}$ the trivial type.

Let $\omega$ be the mod $p$ cyclotomic character and
$\omega_2$ the Serre fundamental character of level $2$,
so that $\omega_2^{p+1} = \omega$.

As $\overline{\rho}$ is irreducible we can write $\overline{\rho}_{|I_p}$
as
$$
\begin{pmatrix}{\omega_2^{n+1}} & 0 \\ 0 &
{\omega_2^{p(n+1)}}\end{pmatrix}\otimes\omega^m
$$
for some $0 \leq n \leq p-2$ and $m \in \mathbb{Z}/(p-1)\mathbb{Z}$. 
There are exactly two nonzero intrinsic multiplicities:
we have 
$\mu_{n,m}(\overline{\rho}) = \mu_{p-1-n,n+m}(\overline{\rho}) = 1$. 

As we have chosen $\mathtt{t}$ to be the trivial type, the representation
$\sigma(\mathtt{t})$ is the Steinberg representation
$\operatorname{Symm}^{p-1}\overline{\mathbb{Q}}_p^2$ and is of dimension $p$. 
The automorphic multiplicity $\mu_{Aut}(\overline{\rho},v,\mathtt{t})$
can be nonzero only for $v = (a,b)$ such that
the central character of
$\overline{\sigma(v,\mathtt{t})}$ is that same as the central character of 
those $\sigma_{n,m}$ for which $\mu_{n,m}(\overline{\rho}) \neq 0$,
that is 
$p+a+2b = n+2m \pmod {p-1}$. 

Finally we get, as $a \to +\infty$ and $p+a+2b = n+2m \pmod {p-1}$:
\begin{itemize}
\item
$\mu_{Aut}(\overline{\rho},(a,b),\mathtt{t}) = 4p(a+1)/(p^2-1) + O(1)$ 
if $1 \leq n \leq p-2$ (generic case)
\item
$\mu_{Aut}(\overline{\rho},(a,b),\mathtt{t}) = 2p(a+1)/(p^2-1) + O(1)$ if $n = 0$.
\end{itemize}

The constructions of Section \ref{motivation} can be generalized
to the case of deformation rings parameterizing representations that have
the additional property of being potentially crystalline. 
These rings differ from
the ones previously described only when the Galois type $\mathtt{t}$ is a scalar
(otherwise the condition of being potentially crystalline is automatically
satisfied). In this case, we need to replace the representation 
$\sigma(\mathtt{t})$ by another representation which we call
$\sigma^{\text{cr}}(\mathtt{t})$.  When $\mathtt{t}$ is trivial, 
$\sigma^{\text{cr}}(\mathtt{t})$ is the trivial representation.

In this case we get, when $\mathtt{t}$ is the trivial type
and as $a \to +\infty$ and $a+2b = n+2m \pmod {p-1}$:
\begin{itemize}
\item
$\mu^{\text{cr}}_{Aut}(\overline{\rho},(a,b),\mathtt{t}) = 4(a+1)/(p^2-1) + O(1)$ 
if $1 \leq n \leq p-2$.
\item
$\mu^{\text{cr}}_{Aut}(\overline{\rho},(a,b),\mathtt{t}) = 2(a+1)/(p^2-1) + O(1)$ if $n = 0$.
\end{itemize}

\subsection{Example: $K$ an unramified extension of $\mathbb{Q}_p$, and
$\overline{\rho}$ as generic as possible}
\label{ex_unram}

Let now $K$ be the unramified extension of degree $h$ of $\mathbb{Q}_p$.
In this situation we don't know if the Breuil-Mézard conjecture holds and we
don't even know which intrinsic multiplicities would make it true.
However, thanks to results of Gee and Kisin (\cite{GK}, Theorem A), we have some
results about the intrinsic multiplicities in the case where $K$ is
unramified. They show that $\mu_{\sigma}(\overline{\rho})$ is nonzero if and
only if $\sigma$ is an element of
the set $D(\overline{\rho})$ of Serre weights defined in the article
\cite{BDJ}. Moreover they show that if $\sigma$ is what they call
Fontaine-Laffaille regular (definition 2.1.8 in \cite{GK}), then
$\mu_{\sigma}(\overline{\rho}) = 1$ when it is nonzero. In our notations,
Fontaine-Laffaille regular weights are representations $L_n(m)$ where the
digits of $n$ in base $p$ are all in $\{0,\dots,p-3\}$. 

Let us fix a morphism $\mathbb{F}_{p^h} \to E$. We choose
a numbering $\{\tau_0,\dots,\tau_{h-1}\}$ 
of $H_K = \operatorname{Hom}(K,\overline{\mathbb{Q}}_p)$ such that
$\tau_i$ reduces modulo $p$ to the $i$-th power of Frobenius.
The algebraic irreducible representations of $\operatorname{Res}_{K/\mathbb{Q}_p}\operatorname{GL}_2$
are then the
$\sigma(v) = 
\otimes_{i=0}^{h-1}
(\operatorname{Symm}^{a_i}W_{\tau_i}\otimes \det_{\tau_i}^{b_i})$,
for $v = (a_i,b_i) \in (\mathbb{Z}_{\geq 0}^h \times \mathbb{Z}^h)$.
Such a representation reduces in characteristic $p$ to 
the representation
$S_{a_0}(b_0)^{[0]}\otimes
S_{a_1}(b_1)^{[1]}\dots 
\otimes S_{a_{h-1}}(b_{h-1})^{[h-1]}$.
Note that this representation has central character
$\sum_{i=0}^{h-1}p^i(a_i+2b_i)$.

The irreducible representations in characteristic $p$ of
$\operatorname{GL}_2(\mathbb{F}_q)$, $q = p^h$ are those described in Section
\ref{section_representations}
with $f = h$.
We denote $a_{L_n(m)}(v,\mathtt{t})$ by $a_{n,m}(v,\mathtt{t})$ 
as in Section \ref{section_asymptote}.

Let $\overline{\rho}$ a $2$-dimensional irreducible representation of $G_K$
with coefficients in $E$. 
After twisting $\overline{\rho}$ by some character we can write:
$$
\overline{\rho}_{|I} = 
\begin{pmatrix}
\omega_{2h}^{r_0+1+p(r_1+1)+\dots+p^{h-1}(r_{h-1}+1)} & 0 \\
0 & \omega_{2h}^{p^hr_0+1+p^{h+1}(r_1+1)+\dots+p^{2h}(r_{h-1}+1)}
\end{pmatrix}
$$
for some $0 \leq r_0 \leq p-1$ and some $-1 \leq r_i \leq p-2$
for $1 \leq i \leq h-1$ and where 
$\omega_{2h}$ denotes the Serre fundamental character of level $2h$.

In this example we take $\overline{\rho}$ to be as generic as possible,
that is we make the following assumption:
$$
2 \leq r_0 \leq p-3\text{ and }1\leq r_i \leq p-4\text{ for }i\geq 1.
$$

Note that in this case $\overline{\rho}$ is sufficiently generic 
in the sense of the article \cite{BP}, Lemma 11.4.
As a consequence, we see that 
$D(\overline{\rho})$ has exactly $2^h$ elements.
Moreover the elements of $D(\overline{\rho})$ can be described
using the method of \cite{BP}, Lemma 11.4. Using this description,
we see easily that our additional genericity condition ensures that
all $\sigma \in D(\overline{\rho})$ are Fontaine-Laffaille regular in the
sense of \cite{GK}, and thus have $\mu_{\sigma}(\overline{\rho}) = 1$.
This also ensures that for all $L_n(m) \in D(\overline{\rho})$, we
have $\omega(n) = 2^h$, as no digit of $n$ in base $p$ is $p-1$ and $n\neq 0$.
 
Note that all weights in $D(\overline{\rho})$ have central character
$r_0+1+p(r_1+1)+\dots+p^{h-1}(r_{h-1}+1)$.

Finally, under this hypothesis on the genericity of $\overline{\rho}$,
 we get, for a fixed Galois type $\mathtt{t}$,
$(a_i,b_i)$ such that 
$\alpha(\overline{\sigma(v,\mathtt{t})}) = \alpha(L_n(m))$ for all
$L_n(m)$ in $D(\overline{\rho})$,
that is
$$
\sum_{i=0}^{h-1}p^i(a_i+2b_i) +
\alpha(\overline{\sigma(\mathtt{t})}) = 
r_0+1+p(r_1+1)+\dots+p^{h-1}(r_{h-1}+1),
$$
and $\dim \sigma(v) \to +\infty$: 
$$
\mu_{Aut}(\overline{\rho},v,\mathtt{t}) = 
\frac{4^h}{p^{2h}-1}\dim \sigma(\mathtt{t})(\prod_{i=0}^{h-1}(a_i+1))
+ O(\prod_{i=0}^{h-1}(a_i+1))^{1-1/h})
$$

\end{document}